\documentclass{amsart}
\usepackage{amsmath, amsthm, epsfig}
\usepackage{hyperref}
\usepackage{amssymb, latexsym}
\usepackage{url}

\DeclareMathOperator{\Ext}{\mathrm Ext}

\DeclareMathOperator{\rank}{\mathrm rank}
\DeclareMathOperator{\Reg}{\mathrm Reg}

\DeclareMathOperator{\Tor}{\mathrm Tor}

\DeclareMathOperator{\Vol}{\mathrm Vol}

\begin{document}
\bibliographystyle{plain}

 \newtheorem{theorem}{Theorem}[section]
 \newtheorem{lemma}{Lemma}[section]
 \newtheorem{corollary}{Corollary}[section]
 \newtheorem{conjecture}{Conjecture}[section]
 \newcommand{\mc}{\mathcal}
 \newcommand{\A}{\mc A}
 \newcommand{\B}{\mc B}
 \newcommand{\cc}{\mc C}
 \newcommand{\D}{\mc D}
 \newcommand{\E}{\mc E}
 \newcommand{\F}{\mc F }
 \newcommand{\G}{\mc G}
 \newcommand{\hH}{\mc H}
 \newcommand{\I}{\mc I}
 \newcommand{\J}{\mc J}
 \newcommand{\eL}{\mc L}
 \newcommand{\M}{\mc M}
 \newcommand{\eN}{\mc N}
 \newcommand{\pP}{\mc P}
 \newcommand{\qq}{\mc Q}
 \newcommand{\sS}{\mc S}
 \newcommand{\U}{\mc U}
 \newcommand{\V}{\mc V}
 \newcommand{\X}{\mc X}
 \newcommand{\Y}{\mc Y}
 \newcommand{\C}{\mathbb{C}}
 \newcommand{\R}{\mathbb{R}}
 \newcommand{\N}{\mathbb{N}}
 \newcommand{\bP}{\mathbb P}
 \newcommand{\Q}{\mathbb{Q}}
 \newcommand{\T}{\mathbb{T}}
 \newcommand{\Z}{\mathbb{Z}}
 \newcommand{\fA}{\mathfrak A}
 \newcommand{\fB}{\mathfrak B}
 \newcommand{\ff}{\mathfrak F}
 \newcommand{\fI}{\mathfrak I}
 \newcommand{\fJ}{\mathfrak J}
 \newcommand{\fP}{\mathfrak P}
 \newcommand{\fQ}{\mathfrak Q}
 \newcommand{\fU}{\mathfrak U}
 \newcommand{\fb}{f_{\beta}}
 \newcommand{\fg}{f_{\gamma}}
 \newcommand{\gb}{g_{\beta}}
 \newcommand{\vphi}{\varphi}
 \newcommand{\vep}{\varepsilon}
 \newcommand{\bo}{\boldsymbol 0}
 \newcommand{\bone}{\boldsymbol 1}
 \newcommand{\ba}{\boldsymbol a}
 \newcommand{\bb}{\boldsymbol b}
 \newcommand{\bc}{\boldsymbol c}
 \newcommand{\be}{\boldsymbol e}
 \newcommand{\bk}{\boldsymbol k}
 \newcommand{\bm}{\boldsymbol m}
 \newcommand{\bn}{\boldsymbol n}
 \newcommand{\balpha}{\boldsymbol \alpha}
 \newcommand{\Bbeta}{\boldsymbol \beta}
 \newcommand{\bgamma}{\boldsymbol \gamma}
 \newcommand{\bbeta}{\boldsymbol \eta}
 \newcommand{\blambda}{\boldsymbol \lambda}
 \newcommand{\bxi}{\boldsymbol \xi}
 \newcommand{\bt}{\boldsymbol t}
 \newcommand{\bu}{\boldsymbol u}
 \newcommand{\bv}{\boldsymbol v}
 \newcommand{\bx}{\boldsymbol x}
 \newcommand{\bwy}{\boldsymbol y}
 \newcommand{\bw}{\boldsymbol w}
 \newcommand{\bz}{\boldsymbol z}
 \newcommand{\hmu}{\widehat \mu}
 \newcommand{\oK}{\overline{K}}
 \newcommand{\oKt}{\overline{K}^{\times}}
 \newcommand{\oQ}{\overline{\Q}}
 \newcommand{\oq}{\oQ^{\times}}
 \newcommand{\oQt}{\oQ^{\times}/\Tor\bigl(\oQ^{\times}\bigr)}
 \newcommand{\ot}{\Tor\bigl(\oQ^{\times}\bigr)}
 \newcommand{\h}{\frac12}
 \newcommand{\hh}{\tfrac12}
 \newcommand{\dx}{\text{\rm d}x}
 \newcommand{\dbx}{\text{\rm d}\bx}
 \newcommand{\dy}{\text{\rm d}y}
 \newcommand{\dmu}{\text{\rm d}\mu}
 \newcommand{\dnu}{\text{\rm d}\nu}
 \newcommand{\dla}{\text{\rm d}\lambda}
 \newcommand{\dlav}{\text{\rm d}\lambda_v}
 \newcommand{\trho}{\widetilde{\rho}}
 \newcommand{\dtrho}{\text{\rm d}\widetilde{\rho}}
 \newcommand{\drho}{\text{\rm d}\rho}

\title[Heights]{A bound for the exterior\\product of $S$-units}
\author{Shabnam~Akhtari}
\author{Jeffrey~D.~Vaaler}
\subjclass[2010]{11J25, 11R27, 15A75, 05D99}
\keywords{Weil height, exterior product}
\thanks{The authors are grateful to the anonymous referee. Shabnam Akhtari's research was supported by the Simons Foundation Collaboration Grant, Award Number 635880, and by  the National Science Foundation Awards DMS-2001281 and DMS-2327098.}

\address{Department of Mathematics, Pennsylvania State University, University Park, PA 16802 USA}
\email{akhtari@psu.edu}

\address{Department of Mathematics, University of Texas, Austin, Texas 78712 USA}
\email{vaaler@math.utexas.edu}
\numberwithin{equation}{section}

\begin{abstract}  We generalize an inequality for the determinant of a real matrix proved by A.~Schinzel, to 
more general exterior products of vectors in Euclidean space.  We apply this inequality to the logarithmic embedding of
$S$-units contained in a number field $k$.  This leads to a bound for the exterior product of $S$-units expressed as a product of 
heights.  Using a volume formula of P.~McMullen we show that our inequality is sharp up to a constant that depends only on the 
rank of the $S$-unit group but not on the field $k$.  Our inequality is related to a conjecture of F.~Rodriguez Villegas. 
\end{abstract}

\maketitle

\section{Introduction}

Let $k$ be an algebraic number field, $k^{\times}$ its multiplicative group of nonzero elements, and $h : k^{\times} \rightarrow [0, \infty)$ 
the absolute, logarithmic, Weil height (or simply the {\it height}).  In \cite{akhtari2016} we proved inequalities that compare the size of an 
$S$-regulator with the product of heights of a maximal collection of independent $S$-units.  If $k \subseteq l$ are both number fields the
results in \cite{akhtari2022} extend inequalities of this sort to the multiplicative group of relative units.  In the present paper we prove 
analogous inequalities for the exterior product of a collection of independent $S$-units that is not a maximal collection.

At each place $v$ of $k$ we write $k_v$ for the completion of $k$ at $v$.  We use two absolute values $\|\ \|_v$ and $|\ |_v$ from the place $v$.  
The absolute value $\|\ \|_v$ extends the usual archimedean or non-archimedean absolute value on the subfield $\Q$.  Then $|\ |_v$ must be a 
power of $\|\ \|_v$, and we set
\begin{equation}\label{intro1}
|\ |_v = \|\ \|_v^{d_v/d},
\end{equation}
where $d_v = [k_v : \Q_v]$ is the local degree of the extension and $d = [k : \Q]$ is the global degree.  With these normalizations
the height of an algebraic number $\alpha \not= 0$ that belongs to $k$ is given by
\begin{equation}\label{intro3}
h(\alpha) = \sum_v \log^+ |\alpha|_v = \hh \sum_v \bigl|\log |\alpha|_v\bigr|.
\end{equation}
Each sum in (\ref{intro3}) is over the set of all places $v$ of $k$, and the equality between the two sums follows 
from the product formula. 

Let $S$ be a finite set of places of $k$ such that $S$ contains all the archimedean places.  Then
\begin{equation*}\label{first1}
O_S = \big\{\gamma \in k : \|\gamma\|_v \le 1\ \text{for all places}\ v \notin S\big\}
\end{equation*}
is the ring of $S$-integers in $k$, and
\begin{equation*}\label{first3}
O_S^{\times} = \big\{\gamma \in k^{\times} : \|\gamma\|_v = 1\ \text{for all places}\ v \notin S \big\}
\end{equation*}
is the multiplicative group of $S$-units in $O_S$.  The abelian group $O_S^{\times}$ has rank $r$, where $|S| = r+1$, and we assume that $r$ is positive.  
We write $\bx = (x_v)$ for a (column) vector in $\R^{r + 1}$ where the coordinates of $\bx$ are indexed by places $v$ in $S$.  And we write
\begin{equation*}\label{first10}
\|\bx\|_1 = \sum_{v \in S} |x_v|
\end{equation*}
for the $l^1$-norm of $\bx$.  The {\it logarithmic embedding} of $O_S^{\times}$ into $\R^{r+1}$ is the homomorphism defined at each point 
$\alpha$ in $O_S^{\times}$ by
\begin{equation}\label{first6}
\alpha \mapsto \balpha = \bigl(d_v \log \|\alpha\|_v\bigr),
\end{equation}
where the rows of the vector $\balpha$ on the right of (\ref{first6}) are indexed by places $v$ in $S$.  It follows from (\ref{intro1}) and (\ref{intro3}) 
that if $\alpha$ is a point in $O_S^{\times}$ and $\balpha$ is the image of $\alpha$ in $\R^{r+1}$ using the logarithmic embedding (\ref{first6}), then
\begin{equation}\label{first12}
2 [k : \Q] h(\alpha) = \sum_{v \in S} \bigl|d_v \log \|\alpha\|_v\bigr| = \|\balpha\|_1.
\end{equation}

The kernel of the logarithmic embedding (\ref{first6}) is the torsion subgroup
\begin{equation}\label{first18}
\big\{\alpha \in O_S^{\times} : \bigl(d_v \log \|\alpha\|_v\bigr) = \bo\big\} = \Tor\bigl(O_S^{\times}\bigr)
\end{equation}
of all roots of unity in $k^{\times}$.  It is known that (\ref{first18}) is a finite, cyclic group, and from the $S$-unit theorem of 
Dirichlet-Chevalley-Hasse (see \cite[Theorem 3.12]{narkiewicz2010}) we learn that the quotient
\begin{equation*}\label{first23}
\fU_S(k) = O_S^{\times}/\Tor\bigl(O_S^{\times}\bigr)
\end{equation*}
is a free abelian group of rank $r$.   Therefore the logarithmic embedding (\ref{first6}) induces an isomorphism from $\fU_S(k)$ onto 
the discrete subgroup
\begin{equation*}\label{first33}
\Gamma_S(k) = \big\{\bigl(d_v \log \|\alpha\|_v\bigr) : \alpha \in O_S^{\times}\big\} \subseteq \R^{r+1},
\end{equation*}
which is a free group of rank $r$.  It follows from the product formula 
\begin{equation*}\label{first34}
\sum_{v \in S} d_v \log \|\alpha\|_v = 0
\end{equation*}
that $\Gamma_S(k)$ is contained in the $r$-dimensional diagonal subspace
\begin{equation*}\label{first38}
\D_r = \Big\{\bx = (x_v) : \sum_{v \in S} x_v = 0\Big\} \subseteq \R^{r + 1}.
\end{equation*}
The height $h$ is constant on cosets of the quotient group $\fU_S(k)$ and therefore $h$ is well defined as a map
\begin{equation*}\label{first42}
h : \fU_S(k) \rightarrow [0, \infty).
\end{equation*}

Let $\eta_1, \eta_2, \dots , \eta_r$ be multiplicatively independent elements in $\fU_S(k)$ that form a basis for the free group $\fU_S(k)$.  Let
\begin{equation*}\label{first49}
\bbeta_j = \bigl(d_v \log \|\eta_j\|_v\bigr),\quad\text{for $j = 1, 2, \dots , r$,}
\end{equation*}
be the logarithmic embedding of these points in $\Gamma_S(k) \subseteq \D_r$.  Working with the induced $l^1$-norm in the exterior algebra 
$\Ext\bigl(\R^{r+1}\bigr)$ we find that
\begin{equation}\label{first54}
(r + 1) \Reg_S(k) = \|\bbeta_1 \wedge \bbeta_2 \wedge \cdots \wedge \bbeta_r\|_1,
\end{equation}  
where $\Reg_S(k)$ is the $S$-regulator.  More generally, let $\alpha_1, \alpha_2, \dots , \alpha_r$ be multiplicatively independent elements in 
$\fU_S(k)$, and let $\fA \subseteq \fU_S(k)$ be the multiplicative subgroup of rank $r$ which they generate.  Let
\begin{equation*}\label{first56}
\balpha_j = \bigl(d_v \log \|\alpha_j\|_v\bigr),\quad\text{for $j = 1, 2, \dots , r$,}
\end{equation*}
be the image of $\alpha_1, \alpha_2, \dots , \alpha_r$ in $\Gamma_S(k)$.  It follows that there exists a unique $r \times r$ nonsingular matrix
$B = \bigl(b_{i j}\bigr)$ with entries in $\Z$, such that
\begin{equation}\label{first58}
\balpha_j = \sum_{i = 1}^r \bbeta_i b_{i j},\quad\text{for $j = 1, 2, \dots , r$.}
\end{equation}
Then the index of the subgroup $\fA$ in $\fU_S(k)$ is 
\begin{equation}\label{first60}
[\fU_S(k) : \fA] = |\det B|.
\end{equation}
Combining (\ref{first54}), (\ref{first58}), and (\ref{first60}), we find that
\begin{equation}\label{first62}
(r+1) \Reg_S(k) [\fU_S(k) : \fA] = \|\balpha_1 \wedge \balpha_2 \wedge \cdots \wedge \balpha_r\|_1.
\end{equation}
In \cite[Theorem 1.1]{akhtari2016} we proved an upper bound for the $S$-regulator that is equivalent to the identity (\ref{first62}) and the inequality
\begin{equation}\label{first64}
\|\balpha_1 \wedge \balpha_2 \wedge \cdots \wedge \balpha_r\|_1 \le 2^{-r} (r + 1) \prod_{j = 1}^r \|\balpha_j\|_1.
\end{equation}
The following result provides a generalization of (\ref{first64}) to an exterior product of $q$ independent vectors in the free group 
$\Gamma_S(k)$, where $1 \le q \le r$.

\begin{theorem}\label{thmfirst1}  Let $\alpha_1, \alpha_2, \dots , \alpha_q$ be multiplicatively independent points in $\fU_S(k)$, and let
\begin{equation*}\label{first84}
\balpha_j = \bigl(d_v \log \|\alpha_j\|_v\bigr),\quad\text{for $j = 1, 2, \dots , q$,}
\end{equation*}
be the logarithmic embedding of $\alpha_1, \alpha_2, \dots , \alpha_q$ in $\Gamma_S(k)$.  Then we have
\begin{equation}\label{first89}
\|\balpha_1 \wedge \balpha_2 \wedge \cdots \wedge \balpha_q\|_1 \le 2^{-q} C(q, r) \prod_{j = 1}^q \|\balpha_j\|_1,
\end{equation}
where
\begin{equation}\label{first93}
C(q, r) = \min\bigg\{2^q, \biggl(\frac{r+1}{r+1-q}\biggr)^{r+1-q}\bigg\}.
\end{equation}
\end{theorem}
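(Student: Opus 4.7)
\emph{Plan.} I would realize the left-hand side of (\ref{first89}) as a sum of absolute values of $q \times q$ minors and then prove the two alternatives appearing in (\ref{first93}) separately; the theorem will follow by taking their minimum. Let $A = (\alpha_{v,j})$ be the $(r+1) \times q$ matrix whose $j$-th column is $\balpha_j$, with rows indexed by $v \in S$. The standard identification $\Lambda^q(\R^{r+1}) \simeq \R^{\binom{r+1}{q}}$ (using the $l^1$ norm in the coordinate basis $\{e_V : |V| = q\}$) yields
\begin{equation*}
\|\balpha_1 \wedge \balpha_2 \wedge \cdots \wedge \balpha_q\|_1 = \sum_{V \subseteq S,\ |V| = q} \bigl|\det A_V\bigr|,
\end{equation*}
where $A_V$ denotes the $q \times q$ submatrix of $A$ with rows in $V$. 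The product formula, which places $\Lambda_S(k)$ inside $\D_r$, tells us that every column of $A$ sums to zero.

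\emph{The bound with $C = 2^q$.} This alternative does not require the zero-column-sum constraint. By the Leibniz expansion, $|\det A_V| \le \sum_\sigma \prod_j |\alpha_{\sigma(j), j}|$, where $\sigma$ ranges over bijections $[q] \to V$. Summing over $V$ and re-indexing the pairs $(V, \sigma)$ as ordered $q$-tuples of distinct indices in $S$, then majorizing by the full sum over $S^q$, one obtains
\begin{equation*}
\sum_V |\det A_V| \le \prod_{j=1}^q \|\balpha_j\|_1 = 2^{-q} \cdot 2^q \prod_{j=1}^q \|\balpha_j\|_1,
\end{equation*}
which establishes this alternative.

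\emph{The bound with $C = \bigl((r+1)/(r+1-q)\bigr)^{r+1-q}$.} This is the harder case; it uses the zero-column-sum condition essentially, and is the part that extends Schinzel's determinant inequality as announced in the abstract. The single-minor Schinzel estimate $|\det A_V| \le 2^{-q}\prod_j \|\balpha_j\|_1$ (which rests on the inequality $\|\bv\|_\infty \le \tfrac12\|\bv\|_1$ for zero-sum $\bv$) gives only the factor $\binom{r+1}{q}$ after a crude summation over $V$, and since $\binom{r+1}{q} \ge \bigl((r+1)/(r+1-q)\bigr)^{r+1-q}$ in general, this by itself is too weak. The sharper factor has to exploit the linear redundancy among the $\binom{r+1}{q}$ minors that the column sums produce: for each $q$-subset $V$ with complement $U = S \setminus V$ of size $p = r+1-q$, the rows of $A$ indexed by $U$ are determined (with signs) by those indexed by $V$. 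My expectation is that a weighted averaging over the $p$-subsets $U$, combined with an AM-GM or Jensen-type optimization, replaces $\binom{r+1}{q}$ by $((r+1)/p)^p$—a value that naturally arises from distributing $r+1$ units as evenly as possible into $p$ groups.

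The main obstacle is identifying the precise combinatorial mechanism that extracts this exact constant; this is where the genuine extension of Schinzel's inequality lies, rather than in any single-minor estimate. Once both alternatives are in place, the theorem follows at once by combining them and taking $C(q, r) = \min\{2^q, ((r+1)/(r+1-q))^{r+1-q}\}$.
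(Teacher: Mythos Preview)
Your argument for the $2^q$ alternative is correct and in fact simpler than the paper's: you obtain $\sum_V |\det A_V| \le \prod_j \|\balpha_j\|_1$ directly by Leibniz and termwise majorization, whereas the paper routes this case through the Schinzel norm, reduction to extreme points, Cauchy--Binet, and Hadamard. Your version holds for arbitrary vectors (not just zero-sum ones), which is a small bonus.

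The second alternative, however, is not proved---it is only hoped for. You correctly observe that the single-minor Schinzel bound yields only $\binom{r+1}{q}$ after crude summation, and you speculate that ``a weighted averaging over the $p$-subsets $U$, combined with an AM--GM or Jensen-type optimization'' should produce the sharper $((r+1)/p)^p$. But no such averaging is exhibited, and in fact the paper does not proceed this way. The actual mechanism is structurally different: the paper first introduces the Schinzel norm $\delta(\bx) = \tfrac12|\sum x_n| + \tfrac12\sum |x_n|$ on $\R^{r+1}$, identifies the extreme points of its unit ball as the vectors $\pm\be_m$ and $\be_m-\be_n$, and shows by convexity that the supremum of $\|\bxi_1\wedge\cdots\wedge\bxi_q\|_1$ over the unit ball is attained at extreme points. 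For extreme-point configurations with all columns in $F_N = \{\be_m-\be_n\}$, the $\{0,\pm1\}$ pattern of the transposed matrix determines a partition of $\{1,\dots,r+1\}$ into ``minimal'' blocks $A_1,\dots,A_p$ (with $p=r+1-q$ when the rank is full), and a nonsingular $q\times q$ minor must omit exactly one row from each block; counting such choices and applying AM--GM to $\prod |A_j|$ yields $((r+1)/p)^p$. Mixed configurations are reduced to this case inductively.

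So the AM--GM step you anticipated does appear, but only after a reduction to a finite combinatorial problem on extreme points whose structure makes the block partition---and hence the count of nonsingular minors---visible. That reduction is the missing idea in your proposal; without it, there is no concrete object over which to perform the averaging you describe.
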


We find that
\begin{equation*}\label{first96}
C(q, r) = 2^q \quad\text{if $2q \le r+1$,}
\end{equation*}
and
\begin{equation*}\label{first101}
C(q, r) = \biggl(\frac{r+1}{r+1-q}\biggr)^{r+1-q}\quad\text{if $r+1 \le 2q$.}
\end{equation*}
In particular we have $C(r, r) = (r + 1)$ so that (\ref{first89}) includes the inequality (\ref{first64}).  By applying (\ref{first12}) it follows that
(\ref{first89}) can be written using the Weil height as
\begin{equation*}\label{first107}
\|\balpha_1 \wedge \balpha_2 \wedge \cdots \wedge \balpha_q\|_1 \le C(q, r) \prod_{j = 1}^q \bigl([k : \Q] h(\alpha_j)\bigr).
\end{equation*}

Let $\alpha_1, \alpha_2, \dots , \alpha_q$ and $\balpha_1, \balpha_2, \dots , \balpha_q$, be as in the statement of Theorem \ref{thmfirst1},
and let $\fA$ be the subgroup of $\Gamma_S(k)$ generated by $\balpha_1, \balpha_2, \dots , \balpha_q$.  Clearly $\fA$ is a free 
group of rank $q$.  It is easy to show that the $l^1$-norm of the exterior product
\begin{equation}\label{first120}
\|\balpha_1 \wedge \balpha_2 \wedge \cdots \wedge \balpha_q\|_1
\end{equation}
depends on the subgroup $\fA$, but does not depend on the choice of generators.  Because of (\ref{first62}) the $l^1$-norm of the exterior 
product (\ref{first120}) extends the $S$-regulator from the group $\Gamma_S(k)$ to subgroups of $\Gamma_S(k)$ having lower rank.  

Alternatively, if $\alpha \not= 1$ belongs to $O_S^{\times}$ and $\balpha \not= \bo$ is the image of $\alpha$ with respect to the logarithmic 
embedding (\ref{first6}), then $\balpha$ and $-\balpha$ are the unique pair of generators of a subgroup of rank $1$ in $\Gamma_S(k)$.  
In view of (\ref{first12}) we may regard $\|\balpha\|_1$ as the height of this subgroup.  Then (\ref{first120}) extends the height to more general
subgroups $\fA \subseteq \Gamma_S(k)$ having rank $q$.  This definition of a height on subgroups is similar to the definition stated
in \cite[equation (6.14)]{vaaler2014}.

In \cite[Theorem 1.2]{akhtari2016} we showed that if $\fA \subseteq \Gamma_S(k)$ is a subgroup with full rank $r$, then there exist 
$r$ linearly independent points in $\fA$ such that the product of their heights is bounded by a number depending only on $r$ multiplied by
\begin{equation}\label{first127}
\Reg_S(k) [\fU_S(k) : \fA]. 
\end{equation} 
The following result generalizes \cite[Theorem 1.2]{akhtari2016} to arbitrary subgroups $\fA \subseteq \Gamma_S(k)$ having positive 
rank $q$ where $1 \le q \le r$.  In this result the $S$-regulator (\ref{first127}) is replaced by the $l^1$-norm (\ref{first120}) of 
the exterior product of a set of generators for the subgroup $\fA$. 

\begin{theorem}\label{thmfirst2}  Let $\fA \subseteq \Gamma_S(k)$ be a subgroup of positive rank $q$, and let the points
\begin{equation*}\label{first134}
\balpha_j = \bigl(d_v \log \|\alpha_j\|_v\bigr),\quad\text{where $j = 1, 2, \dots , q$,}
\end{equation*}
generate the subgroup $\fA$.   Then there exists a subgroup $\fB \subseteq \fA$ of rank $q$ and a set of generators
\begin{equation*}\label{first138}
\Bbeta_j = \bigl(d_v \log \|\beta_j\|_v\bigr),\quad\text{where $j = 1, 2, \dots , q$,}
\end{equation*}
for $\fB$, such that
\begin{equation}\label{first147}
\|\Bbeta_1 \wedge \Bbeta_2 \wedge \dots \wedge \Bbeta_q\|_1 = [\fA : \fB] \|\balpha_1 \wedge \balpha_2 \wedge \cdots \wedge \balpha_q\|_1,
\end{equation}                  
and
\begin{equation}\label{first154}                  
\prod_{j = 1}^q \|\Bbeta_j\|_1 \le (q!) \|\balpha_1 \wedge \balpha_2 \wedge \cdots \wedge \balpha_q\|_1.
\end{equation}
Moreover, we have $[\fA : \fB] \le q!$.
\end{theorem}

By applying (\ref{first12}) we find that the product on the left of (\ref{first154}) can be written using the Weil height as
\begin{equation*}\label{first182}
\prod_{i = 1}^q \|\Bbeta_i\|_1 = 2^q \prod_{j = 1}^q \bigl([k : \Q] h(\beta_j)\bigr).
\end{equation*}
Because the subgroups $\fB \subseteq \fA$ both have rank $q$, the identity (\ref{first147}) follows as in our derivation of (\ref{first60}) from
(\ref{first58}).  

It would be of interest to know if there exist absolute constants $b_0 > 0$ and $b_1 > 1$ such that the factor $q!$ on the right of (\ref{first154}) 
could be replaced by $b_0 b_1^q$.  This could have implications for a conjecture of F. Rodriguez Villegas which we discuss in section 2.           

\section{A conjecture of F. Rodriguez Villegas}

In a well known paper D.~H.~Lehmer \cite{lehmer1933} proposed an important problem about the roots of irreducible polynomials in $\Z[x]$.  
An equivalent form of Lehmer's problem stated using the absolute, logarithmic, Weil height (\ref{intro3}) is this: does there exist an absolute constant
$c > 0$ such that
\begin{equation*}\label{second0}
c \le [\Q(\alpha) : \Q] h(\alpha)
\end{equation*}
whenever $\alpha \not= 0$ is an algebraic number and not a root of unity?  If $\alpha \not= 0$ and $\alpha$ is not a unit, the lower bound
\begin{equation*}\label{second1}
\log 2 \le [\Q(\alpha) : \Q] h(\alpha)
\end{equation*}
follows easily.  Therefore when considering Lehmer's problem we may restrict our attention to algebraic units $\alpha$ which are not roots of unity. Further 
information about Lehmer's problem can be found in \cite[Section 1.6.15]{bombieri2006}, \cite{smyth2008}, and \cite[Section 3.6]{waldschmidt2000}.

Let $S_{\infty}$ be the set of archimedean places of $k$ and assume that $|S_{\infty}| \ge 2$.  We continue to write $|S_{\infty}| = r + 1$ so that the 
logarithmic embedding (\ref{first6}) is an isomorphism from the free group
\begin{equation*}\label{second2}
\fU_{S_{\infty}}(k) = O_{S_{\infty}}/\Tor\bigl(O_{S_{\infty}}^{\times}\bigr)
\end{equation*}
onto the discrete subgroup $\Gamma_{S_{\infty}}(k)$ of rank $r$ contained in the diagonal subspace $\D_r \subseteq \R^{r+1}$.  Then Lehmer's problem 
asks if there exists an absolute constant $c > 0$ such that the inequality
\begin{equation}\label{second3}
c \le 2 [k : \Q] h(\alpha) = \|\balpha\|_1
\end{equation}
holds at all points $\balpha \not = \bo$ in $\Gamma_{S_{\infty}}(k)$.  A generalization of this conjecture to independent subsets 
$\balpha_1, \balpha_2, \dots , \balpha_q$ in $\Gamma_{S_{\infty}}(k)$ with $2 \le q \le r$ was proposed by Bertrand \cite{bertrand1997}.  More
precisely, Bertrand asked if for each integer $2 \le q$ there exists a constant $c_q > 0$ such that
\begin{equation}\label{second4}
c_q \le \|\balpha_1 \wedge \balpha_2 \wedge \cdots \wedge \balpha_q\|_2,
\end{equation}
where the $l^2$-norm of the wedge  product on the right of (\ref{second4}) is the covolume of the subgroup of $\Gamma_{S_{\infty}}(k)$ generated 
by $\balpha_1, \balpha_2, \cdots , \balpha_q$.  Examples found by Siegel \cite{siegel1969} show that the inequality (\ref{second4}) 
cannot hold for $q = 1$.  However, a positive answer for $q \ge 3$ was established by Amoroso and David in \cite{amoroso1999}.    

An alternative generalization of Lehmer's problem to subgroups of rank $q$ has been proposed in a conjecture of F.~Rodriguez Villegas stated in
\cite[Appendix]{chinburg2022}, and also discussed in \cite{amoroso2021} and \cite{chinburg2022}. We state a special case of this conjecture for pure 
wedges.

\begin{conjecture}\label{consecond1} {\sc [F.~Rodriguez Villegas]}  There exist two absolute constants $c_0 > 0$ and $c_1 > 1$ with the 
following property.  If $q$ is an integer such that
\begin{equation*}\label{second8}
1 \le q \le r = \rank \Gamma_{S_{\infty}}(k),
\end{equation*}
and if $\balpha_1, \balpha_2, \dots , \balpha_q$ are linearly independent points in $\Gamma_{S_{\infty}}(k)$, then
\begin{equation}\label{second13}
c_0 c_1^q \le \|\balpha_1 \wedge \balpha_2 \wedge \cdots \wedge \balpha_q\|_1.
\end{equation}
\end{conjecture}

If $q = 1$ then the truth of (\ref{second13}) would solve the problem originally proposed by Lehmer, and if $q = r$ then (\ref{second13}) follows
from a known lower bound for the regulator proved by R.~Zimmert \cite{zimmert1981}.  Thus the conjecture of Rodriguez Villegas interpolates between 
the unsolved problem of Lehmer and Zimmert's result.  It follows from earlier work of Pohst \cite{pohst1978} and Schinzel \cite{schinzel1973} that
Conjecture \ref{consecond1} holds for the collection of totally real algebraic number fields $k$.  

Let $\balpha_1, \balpha_2, \dots , \balpha_q$ be linearly independent points in $\Gamma_{S_{\infty}}(k)$ and let
$\fA \subseteq \Gamma_{S_{\infty}}(k)$ be the subgroup of rank $q$ that they generate.  We have already observed in connection with
(\ref{first120}) that the $l^1$-norm    
\begin{equation*}\label{second29}
\|\balpha_1 \wedge \balpha_2 \wedge \cdots \wedge \balpha_q\|_1
\end{equation*}
depends on the subgroup $\fA$, but does {\it not} depend on the choice of generators.  Thus Conjecture 2.1 can be regarded as a
generalization of Lehmer's problem (reformulated as a conjecture) from subgroups of rank $1$ to more general subgroups of rank $q$ 
where $1 \le q \le r$.    

Here is a related conjecture.

\begin{conjecture}\label{consecond2}  There exist two absolute constants $d_0 > 0$ and $d_1 > 1$ with the 
following property.  If $q$ is an integer such that
\begin{equation*}\label{second18}
1 \le q \le r = \rank \Gamma_{S_{\infty}}(k),
\end{equation*}
and if $\balpha_1, \balpha_2, \dots , \balpha_q$ are linearly independent points in $\Gamma_{S_{\infty}}(k)$, then
\begin{equation*}\label{second23}
d_0 d_1^q \le \|\balpha_1\|_1 \|\balpha_2\|_1 \cdots \|\balpha_q\|_1.
\end{equation*}
\end{conjecture}

It follows from (\ref{first93}) that the constant on the right of (\ref{first89}) is
\begin{equation*}\label{second28}
2^{-q} C(q, r) \le 1.
\end{equation*}
Therefore if the conjectured inequality (\ref{second13}) is correct, then from Theorem \ref{thmfirst1} we also get
\begin{equation*}\label{second33}
c_0 c_1^q \le \|\balpha_1 \wedge \balpha_2 \wedge \cdots \wedge \balpha_q\|_1 \le \prod_{j = 1}^q \|\balpha_j\|_1.
\end{equation*}
Thus Conjecture \ref{consecond1} implies Conjecture \ref{consecond2} with $d_0 = c_0$ and $d_1 = c_1$.

Now assume that Conjecture \ref{consecond2} is correct.  Let $\balpha_1, \balpha_2, \dots , \balpha_q$ be linearly independent points in 
the logarithmic embedding $\Gamma_{S_{\infty}}(k)$, and let $\fA$ be the subgroup of rank $q$ that they generate.
By Theorem \ref{thmfirst2} there exist linearly independent points $\Bbeta_1, \Bbeta_2, \dots , \Bbeta_q$ in $\fA$ such that
\begin{equation}\label{second37}
d_0 d_1^q \le \|\Bbeta_1\|_1 \|\Bbeta_2\|_1 \cdots \|\Bbeta_q\|_1 \le (q!) \|\balpha_1 \wedge \balpha_2 \wedge \cdots \wedge \balpha_q\|_1,
\end{equation}
where the inequality on the left of (\ref{second37}) follows from Conjecture \ref{consecond2}, and the inequality on the right of (\ref{second37}) 
follows from (\ref{first154}).  However, as $q!$ grows faster than an exponential function of $q$, at present we are unable to conclude that 
Conjecture \ref{consecond2} implies Conjecture \ref{consecond1}.  This could change if the factor $q!$ in the inequality (\ref{first154}) could be 
replaced by a factor of the form $b_0 b_1^q$, where $b_0 > 0$ and $b_1 > 1$ are absolute constants.        

\section{Generalization of Schinzel's inequality, I}

For a real number $x$ we write
\begin{equation*}\label{norm10}
x^+ = \max\{0, x\},\quad\text{and}\quad x^- = \max\{0, -x\},
\end{equation*}
so that $x = x^+ - x^-$ and $|x| = x^+ + x^-$.  Let $\bx = (x_n)$ be a (column) vector in $\R^N$.  As in \cite[equation (4.3)]{akhtari2016}, 
the {\it Schinzel norm} is the function 
\begin{equation*}\label{norm15}
\delta : \R^N \rightarrow [0, \infty) 
\end{equation*}
defined by
\begin{equation*}\label{norm20}
\delta(\bx) = \max\bigg\{\sum_{m=1}^N x_m^+, ~\sum_{n=1}^N x_n^-\bigg\} = \hh \biggl|\sum_{n = 1}^N x_n \biggr| + \hh \sum_{n = 1}^N |x_n|.
\end{equation*}
It is clear that $\delta$ is in fact a norm on $\R^N$, and we write
\begin{equation*}\label{norm25}
K_N = \big\{\bx \in \R^N : \delta(\bx) \le 1\big\}
\end{equation*}
for the corresponding closed unit ball.  Then $K_N$ is a compact, convex, symmetric subset of $\R^N$ with a nonempty interior.
The $N$-dimensional volume of $K_N$ was computed in \cite[Lemma 4.1]{akhtari2016}.  The connection between the Schinzel
norm and the Weil height follows from (\ref{first12}) and (\ref{wedge11}) (see also \cite[Lemma 5.1]{akhtari2016}). 

In Lemma \ref{lemreg1} we will determine the finite collection of extreme points of $K_N$.  Then a combinatorial argument in section 4 applied 
to the extreme points of $K_N$ will lead to a proof of the following inequalities.

\begin{theorem}\label{thmnorm1}  Let $\bx_1, \bx_2, \dots, \bx_L$ be linearly independent vectors in $\R^N$.  If $L = N$ then
\begin{equation}\label{norm35}
|\bx_1 \wedge \bx_2 \wedge \cdots \wedge \bx_N| \le \delta(\bx_1) \delta(\bx_2) \cdots \delta(\bx_N),
\end{equation}
if $L < N \le 2L$ then
\begin{equation}\label{norm38}
\|\bx_1 \wedge \bx_2 \wedge \cdots \wedge \bx_L\|_1 \le \biggl(\frac{N}{N-L}\biggr)^{N - L} \delta(\bx_1) \delta(\bx_2) \cdots \delta(\bx_L),
\end{equation}
and if $2L \le N$ then
\begin{equation}\label{norm41}
\|\bx_1 \wedge \bx_2 \wedge \cdots \wedge \bx_L\|_1 \le 2^L\ \delta(\bx_1) \delta(\bx_2) \cdots \delta(\bx_L).
\end{equation}
Alternatively, for $L < N$ we have
\begin{equation}\label{norm44}
\begin{split}
\|\bx_1 \wedge \bx_2 \wedge &\cdots \wedge \bx_L\|_1\\
				&\le \min\bigg\{2^L, \biggl(\frac{N}{N-L}\biggr)^{N - L}\bigg\}\ \delta(\bx_1) \delta(\bx_2) \cdots \delta(\bx_L).
\end{split}
\end{equation}
\end{theorem}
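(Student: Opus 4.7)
The plan is to combine a description of the extreme points of the Schinzel unit ball $K_N$ with a convexity reduction and a combinatorial estimate via totally unimodular matrices.

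First I would determine the extreme points of $K_N$. The identity $\delta(\bx) = \max\bigl\{\sum_n x_n^+,\ \sum_n x_n^-\bigr\}$ shows that $K_N = \{\bx : \sum_n x_n^+ \le 1 \text{ and } \sum_n x_n^- \le 1\}$, and a short rigidity argument identifies the extreme points as exactly the $N(N+1)$ vectors $\pm \be_i$ and $\be_i - \be_j$ with $i \ne j$. Any boundary point with two or more positive entries, or with two or more negative entries, splits as a nontrivial convex combination inside $K_N$, so no other extreme points exist.

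Next, writing each $\bx_j = \delta(\bx_j) \sum_{\bv} \lambda_{j,\bv}\, \bv$ as a convex combination of extreme points, expanding $\bx_1 \wedge \cdots \wedge \bx_L$ by multilinearity, and applying the triangle inequality for $\|\cdot\|_1$, each of the three inequalities reduces to the corresponding bound when every $\bx_j = \bv_j$ is an extreme point of $K_N$. To exploit this reduction I would embed $\R^N$ into $\R^{N+1}$ by $\bx \mapsto (x_1, \dots, x_N, -\sum_i x_i)$. Under this embedding each extreme point of $K_N$ becomes a root vector $\be_a - \be_b$ with distinct $a, b \in \{1, \dots, N+1\}$, and an $L$-tuple of extreme points corresponds to an oriented multigraph $G$ on $N+1$ vertices with $L$ edges. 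The matrix $M = [\bv_1 \mid \cdots \mid \bv_L]$ is then a submatrix of the oriented incidence matrix of $G$, and total unimodularity forces every $L \times L$ minor into $\{-1, 0, +1\}$. Hence
\begin{equation*}
\|\bv_1 \wedge \cdots \wedge \bv_L\|_1 \;=\; \#\bigl\{I \subseteq \{1, \dots, N\} : |I| = L,\ \det M[I, \cdot] \ne 0\bigr\},
\end{equation*}
and the matrix-tree correspondence identifies the right-hand side with the number of subsets $I$ for which the edges of $G$ become a spanning tree after contracting $\{N+1\} \cup (\{1, \dots, N\} \setminus I)$. The case $L = N$ immediately yields $|\bv_1 \wedge \cdots \wedge \bv_N| \le 1$ and recovers (\ref{norm35}).

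The main obstacle is bounding this count uniformly over $G$ by $\min\{2^L, (N/(N-L))^{N-L}\}$. I expect the bound $2^L$ to follow from a per-edge dichotomy between edges incident to the distinguished vertex $N+1$ and edges lying within $\{1, \dots, N\}$, which gives a direct injection of the valid $I$'s into $\{0,1\}^L$. The sharper bound $(N/(N-L))^{N-L}$ has the shape of an entropy optimum and likely requires an AM--GM or Lagrange-multiplier argument on the degree sequence of $G$ subject to the constraint $\sum_i \deg(i) = 2L$, with extremal configurations concentrated on a small number of vertices. Extracting both bounds uniformly from the same combinatorial quantity, and carefully treating the distinguished role played by vertex $N+1$, is the lengthy combinatorial argument announced in the paragraph preceding the theorem.
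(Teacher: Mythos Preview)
Your framework through the reduction to extreme points and the graph interpretation is sound, and the embedding into $\R^{N+1}$ is a nice device: it turns every extreme point into a root vector $\be_a-\be_b$ and lets you treat $E_N$ and $F_N$ uniformly, whereas the paper handles them separately (Lemma~\ref{lemreg5} strips off the $E_N$ columns by induction). Your total-unimodularity observation that every $L\times L$ minor lies in $\{-1,0,1\}$ matches the paper's Lemma~\ref{lemreg3}.

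The gap is in the final combinatorial step. When the chosen extreme points are linearly independent, your graph $G$ on $N+1$ vertices with $L$ edges is a \emph{forest} with exactly $N+1-L$ components $C_1,\dots,C_{N+1-L}$, and your own contraction criterion says $\det M[I,\cdot]\ne 0$ precisely when $\{N+1\}\cup(\{1,\dots,N\}\setminus I)$ is a transversal of those components. Hence the count you must bound is $\prod_{k\ge 2} c_k$, where $c_k=|C_k|$ and $N+1\in C_1$. The relevant constraint for AM--GM is $\sum_k c_k = N+1$ (with $N-L$ factors in the product), yielding
\[
\prod_{k\ge 2} c_k \le \Bigl(\tfrac{N+1-c_1}{\,N-L\,}\Bigr)^{N-L} \le \Bigl(\tfrac{N}{N-L}\Bigr)^{N-L}.
\]
This is exactly what the paper does in Lemma~\ref{lemgroup9}; the ``minimal subsets'' $A_j$ there are your components restricted to $\{1,\dots,N\}$, and Lemma~\ref{lemgroup8} is the statement that their number is $N-L$. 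Your proposed AM--GM on the \emph{degree sequence} subject to $\sum_i\deg(i)=2L$ targets the wrong invariant and does not control the count; two forests with identical degree sequences can have wildly different component sizes and hence different values of $\|\bxi_1\wedge\cdots\wedge\bxi_L\|_1$.

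For the $2^L$ bound your ``per-edge dichotomy'' injection is too vague to stand as a proof. The paper's argument here is quite different and worth knowing: since every minor is in $\{-1,0,1\}$, Cauchy--Binet gives
\[
\sum_{|I|=L} |\det\Xi_I| \;=\; \sum_{|I|=L} (\det\Xi_I)^2 \;=\; \det(\Xi^T\Xi),
\]
and Hadamard's inequality on the symmetric matrix $\Xi^T\Xi$ yields $\det(\Xi^T\Xi)\le\prod_{\ell}\|\bxi_\ell\|_2^2\le 2^L$. This works for all $L<N$, not only $2L\le N$. (In your component language the same bound also drops out of the elementary inequality $c_k\le 2^{c_k-1}$ applied componentwise, since $\sum_k(c_k-1)=L$.)
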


If $\bx_1, \bx_2, \dots , \bx_N$, are (column) vectors in $\R^N$, then Schinzel \cite{schinzel1978} proved the inequality
\begin{equation}\label{norm47}
\bigl|\det\bigl(\bx_1\ \bx_2\ \cdots\ \bx_N\bigr)\bigr| \le \delta\bigl(\bx_1\bigr) \delta\bigl(\bx_2\bigr) \cdots \delta\bigl(\bx_N\bigr),
\end{equation}
which is equivalent to (\ref{norm35}).   It can be shown that there exist nontrivial cases of equality in the inequality (\ref{norm38}) whenever
the integer $N - L$ is a divisor of $N$.  And it can be shown that there always exist nontrivial cases of equality in the inequality (\ref{norm41}). 
It is instructive to define the function
\begin{equation*}\label{reg20}
g_L : [L, \infty] \rightarrow \bigl[1, e^L\bigr]
\end{equation*}
by
\begin{equation*}\label{reg25}
g_L(x) = \begin{cases}   1&     \text{if $x = L$,}\\
                          \biggl(\dfrac{x}{x - L}\biggr)^{(x - L)}&     \text{if $L < x < \infty$},\\
                                      e^L&      \text{if $x = \infty$}. \end{cases}
\end{equation*}
It follows that $x \mapsto g_L(x)$ is continuous, and has a continuous, positive derivative on $(L, \infty)$.  Then
$x \mapsto g_L(x)$ is strictly increasing on $[L, \infty]$.  We have $g_L(2L) = 2^L$, and this clarifies the behavior of the function
\begin{equation*}\label{reg27}
x \mapsto \min\big\{2^L, g_L(x)\big\}
\end{equation*}
which occurs on the right of (\ref{norm44}).  

We recall that a point $\bk$ in $K_N$ is an {\it extreme point} of $K_N$ if $\bk$ cannot be written as a proper convex combination of
two distinct points in $K_N$.  Obviously all extreme points of $K_N$ occur on the boundary of $K_N$.  Let
\begin{equation*}\label{reg31}
\vphi : \R^N \rightarrow \R
\end{equation*}
be a continuous linear functional, and write
\begin{equation*}\label{reg34}
\delta^*(\vphi) = \sup\{\vphi(\bx) : \delta(\bx) \le 1\}
\end{equation*}
for the dual norm of $\vphi$.  As $K_N$ is compact there exists a point $\bbeta$ in $K_N$ such that
\begin{equation*}\label{reg36}
\delta^*(\vphi) = \vphi(\bbeta).
\end{equation*}
If there exists a linear functional $\vphi$ such that
\begin{equation*}\label{reg38}
\big\{\bbeta \in K_N : \delta^*(\vphi) = \vphi(\bbeta)\big\} = \{\bk\},
\end{equation*}
then $\bk$ is an {\it exposed point} of $K_N$.  It is known (see \cite[section 1.8, exercise 3]{eggleston1963}) that an exposed point of 
$K_N$ is also an extreme point of $K_N$.

We define two finite, disjoint subsets of $\R^N$ by
\begin{equation}\label{reg40}
E_N = \big\{\pm \be_m : 1 \le m \le N\big\},\quad\text{and}\quad F_N = \big\{\be_m - \be_n : m \not= n\big\},
\end{equation}
where $\be_1, \be_2, \dots , \be_N$, are the standard basis vectors in $\R^N$.  Clearly we have
\begin{equation*}\label{reg42}
\bigl|E_N\bigr| = 2N,\quad\text{and}\quad \bigl|F_N\bigr| = N^2 - N.
\end{equation*}
It follows easily that each point of $E_N \cup F_N$ is on the boundary of $K_N$. 

\begin{lemma}\label{lemreg1}  The subset $E_N \cup F_N$ is the collection of all extreme points of $K_N$.
\end{lemma}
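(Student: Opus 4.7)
The plan is to establish both inclusions: each point of $E_N \cup F_N$ is an extreme point of $K_N$, and conversely every extreme point lies in $E_N \cup F_N$. Throughout it is convenient to use the equivalent description
\[
K_N = \Bigl\{\bx \in \R^N : \sum_m x_m^+ \le 1\ \text{and}\ \sum_n x_n^- \le 1\Bigr\},
\]
which follows immediately from (\ref{norm20}). For the first inclusion I would argue directly. Suppose $\be_m = \hh(\bu + \bv)$ with $\bu, \bv \in K_N$. Then $u_m \le \sum_i u_i^+ \le 1$, and similarly for $v_m$; together with $u_m + v_m = 2$ this forces $u_m = v_m = 1$. This saturates $\sum_i u_i^+ = 1$, so $u_i \le 0$ for all $i \ne m$, and similarly for $\bv$. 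Combined with $u_i + v_i = 0$ for $i \ne m$, we get $\bu = \bv = \be_m$. The case $-\be_m$ is symmetric, while for $\be_m - \be_n$ both constraints $\sum u_i^+ \le 1$ and $\sum u_i^- \le 1$ saturate, forcing the $m$- and $n$-coordinates of $\bu, \bv$ to be $1$ and $-1$ respectively; the remaining coordinates of each vector are then simultaneously nonpositive and nonnegative, hence zero.

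For the converse, let $\bk$ be an extreme point, so $\delta(\bk) = 1$, and write $P = \{i : k_i > 0\}$ and $M = \{i : k_i < 0\}$. The central observation is that $\delta$ is locally invariant under coordinate differences taken within $P$ or within $M$: if $|P| \ge 2$ and $p_1, p_2 \in P$, then for all sufficiently small $t > 0$ the perturbations $\bk \pm t(\be_{p_1} - \be_{p_2})$ retain the sign pattern of $\bk$, so the values of $\sum_m x_m^+$ and $\sum_n x_n^-$ are unchanged and both perturbations lie in $K_N$. Since they average to $\bk$, extremality is contradicted, so $|P| \le 1$. By the analogous argument using two indices in $M$, also $|M| \le 1$.

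It then suffices to enumerate the cases $|P|, |M| \in \{0, 1\}$ consistent with $\delta(\bk) = 1$. The pair $(0, 0)$ is excluded. If $(|P|, |M|) = (1, 0)$ then $\bk = a \be_p$ with $a = \delta(\bk) = 1$, so $\bk \in E_N$; the case $(0, 1)$ is symmetric. If $(|P|, |M|) = (1, 1)$ then $\bk = a \be_p - b \be_q$ with $a, b > 0$ and $\max\{a, b\} = 1$; if, say, $a < b = 1$, then the perturbations $\bk \pm t \be_p$ for $0 < t < \min\{a, 1 - a\}$ preserve the sign pattern and keep $\sum x_m^+ < 1$ and $\sum x_n^- = 1$, again contradicting extremality. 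Hence $a = b = 1$ and $\bk = \be_p - \be_q \in F_N$. The main technical point is ruling out asymmetric coefficients in the $(1,1)$ case, which requires a slightly different perturbation direction (a pure basis vector rather than a coordinate difference) than in the $|P| \ge 2$ argument; once this is handled, every remaining sign pattern collapses to an element of $E_N \cup F_N$.
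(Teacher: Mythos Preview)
Your proof is correct, but it follows a different route from the paper's. For the first inclusion the paper constructs, for each point of $E_N \cup F_N$, an explicit linear functional that exposes it (the functionals $\vphi_m(\bx) = \hh\sum_n x_n + \hh x_m$ and $\psi_{mn}(\bx) = \hh(x_m - x_n)$), whereas you argue directly from the midpoint definition of extremality. For the second inclusion the paper writes down an explicit convex combination expressing an arbitrary boundary point $\bx$ as
\[
\bx = (1-\sigma^-)\sum_m x_m^+ \be_m + (1-\sigma^+)\sum_n x_n^- (-\be_n) + \underset{m\ne n}{\sum\sum}\, x_m^+ x_n^- (\be_m - \be_n),
\]
with $\sigma^{\pm} = \sum x_n^{\pm}$, and checks that the coefficients are nonnegative and sum to $1$; this shows $K_N$ is the convex hull of $E_N \cup F_N$, so no other point can be extreme. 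Your argument instead perturbs a candidate extreme point along $\be_{p_1} - \be_{p_2}$ (or $\be_p$ in the asymmetric $(1,1)$ case) to force $|P|, |M| \le 1$ and then $a = b = 1$. Your method is lighter and entirely self-contained; the paper's method is more constructive, yielding an explicit barycentric representation of every boundary point in terms of $E_N \cup F_N$, though that representation is not used elsewhere in the paper.
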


\begin{proof}  For $1 \le m \le N$ let $\vphi_m : \R^N \rightarrow \R$ be the linear functional defined by
\begin{equation*}\label{reg45}
\vphi_m(\bx) = \hh \sum_{n = 1}^N x_n  +  \hh x_m.
\end{equation*}
Then we have
\begin{equation}\label{reg46}
\vphi_m(\bx) \le \hh \biggl|\sum_{n = 1}^N x_n\biggr|  + \hh \bigl|x_m\bigr|,
\end{equation}
and there is equality in the inequality (\ref{reg46}) if and only if 
\begin{equation*}\label{reg47}
0 \le \sum_{n = 1}^N x_n,\quad\text{and}\quad 0 \le x_m.
\end{equation*}
We also have
\begin{equation}\label{reg48}
\hh \biggl|\sum_{n = 1}^N x_n \biggr| + \hh \bigl| x_m\bigr| \le \delta(\bx),
\end{equation}
and there is equality in the inequality (\ref{reg48}) if and only if
\begin{equation*}\label{reg50}
x_n = 0\quad\text{for each $n \not= m$.}
\end{equation*}
Combining (\ref{reg46}) and (\ref{reg48}) we find that
\begin{equation}\label{reg52}
\vphi_m(\bx) \le \delta(\bx)
\end{equation}
for all $\bx$ in $\R^N$, and there is equality in the inequality (\ref{reg52}) if and only if $\bx = t\be_m$ with $0 \le t$.
Therefore we get
\begin{equation*}\label{reg54}
\delta^*\bigl(\vphi_m\bigr) = \sup\big\{\vphi_m(\bx) : \delta(\bx) \le 1\big\} = \vphi_m\bigl(\be_m\bigr) = 1,
\end{equation*}
and
\begin{equation*}
\big\{\bbeta \in K_N : \delta^*\bigl(\vphi_m\bigr) = \vphi_m(\bbeta)\big\} = \big\{\be_m\big\}.
\end{equation*}
This shows that $\be_m$ is an exposed point of $K_N$, and therefore $\be_m$ is an extreme point of $K_N$.  As $K_N$ is symmetric, 
we find that $-\be_m$ is also an extreme point.

Next we suppose that $m \not= n$, and we define the linear functional $\psi_{mn} : \R^N \rightarrow \R$ by
\begin{equation*}\label{reg56}
\psi_{mn}(\bx) = \hh\bigl(x_m - x_n\bigr).
\end{equation*}
Then we have
\begin{equation}\label{reg58}
\psi_{mn}(\bx) \le \hh\biggl|\sum_{\ell = 1}^N x_{\ell}\biggr| + \hh \bigl|x_m\bigr| + \hh \bigl|x_n\bigr|,
\end{equation}
and there is equality in the inequality (\ref{reg58}) if and only if 
\begin{equation*}\label{reg60}
\sum_{\ell = 1}^N x_{\ell} = 0,\quad 0 \le x_m,\quad\text{and}\quad x_n \le 0.
\end{equation*}
And we get
\begin{equation}\label{reg62}
\hh\biggl|\sum_{\ell = 1}^N x_{\ell}\biggr| + \hh \bigl|x_m\bigr| + \hh \bigl|x_n\bigr| \le \delta(\bx),
\end{equation}
with equality in the inequality (\ref{reg62}) if and only if 
\begin{equation*}\label{reg64}
x_{\ell} = 0\quad\text{for all $\ell \not= m$ and $\ell \not= n$.}
\end{equation*}
By combining (\ref{reg58}) and (\ref{reg62}) we find that
\begin{equation}\label{reg66}
\psi_{mn}(\bx) \le \delta(\bx),
\end{equation}
and there is equality in the inequality (\ref{reg66}) if and only if $\bx = t\bigl(\be_m - \be_n\bigr)$ with $0 \le t$.  As
in the previous case we conclude that
\begin{equation*}\label{reg68}
\delta^*\bigl(\psi_{mn}\bigr) = \sup\big\{\psi_{mn}(\bx) : \delta(\bx) \le 1\big\} = \psi_{mn}\bigl(\be_m - \be_n\bigr) = 1,
\end{equation*}
and
\begin{equation*}\label{reg70}
\big\{\bbeta \in K : \delta^*\bigl(\psi_{mn}\bigr) = \psi_{mn}(\bbeta)\big\} = \big\{\be_m - \be_n\big\}.
\end{equation*}
This shows that $\be_m - \be_n$ is an exposed point of $K_N$, and therefore $\be_m - \be_n$ is an extreme point of $K_N$.

We have now shown that each point in $E_N \cup F_N$ is an extreme point of $K_N$.
To complete the proof we will show that if $\bx$ is a point on the boundary of $K_N$, then $\bx$ can be written as a convex 
combination of points in $E_N \cup F_N$.  Thus we assume that
\begin{equation}\label{reg72}
\delta(\bx) = \max\bigg\{\sum_{m=1}^N x_m^+, ~\sum_{n=1}^N x_n^-\bigg\} = 1,
\end{equation}
and we write
\begin{equation*}\label{reg74}
\sigma^+ = \sum_{m = 1}^N x_m^+,\quad\text{and}\quad \sigma^- = \sum_{n = 1}^N x_n^-.
\end{equation*}
Then we have
\begin{align*}\label{reg76}
\begin{split}
&\underset{m \not= n}{\sum_{m = 1}^N \sum_{n = 1}^N} x_m^+ x_n^- (\be_m - \be_n)\\
 	&= \biggl(\sum_{n = 1}^N x_n^-\biggr) \sum_{m = 1}^N x_m^+ \be_m
		- \biggl(\sum_{m = 1}^N x_m^+\biggr) \sum_{n = 1}^N x_n^- \be_n\\
	&= \sigma^- \sum_{m = 1}^N x_m^+ \be_m - \sigma^+ \sum_{n = 1}^N x_n^- \be_n\\
	&= \sum_{m = 1}^N x_m^+ \be_m - \sum_{n = 1}^N x_n^- \be_n
		- (1 - \sigma^-) \sum_{m = 1}^N x_m^+ \be_m + (1 - \sigma^+) \sum_{n = 1}^N x_n^- \be_n\\
	&= \bx - (1 - \sigma^-) \sum_{m = 1}^N x_m^+ \be_m - (1 - \sigma^+) \sum_{n = 1}^N x_n^- (-\be_n),
\end{split}
\end{align*}
and therefore
\begin{equation}\label{reg78}
\begin{split}
\bx = (1 - \sigma^-) &\sum_{m = 1}^N x_m^+ \be_m + (1 - \sigma^+) \sum_{n = 1}^N x_n^- (-\be_n)\cr
                                 &+ \underset{m \not= n}{\sum_{m = 1}^N \sum_{n = 1}^N} x_m^+ x_n^- (\be_m - \be_n).
\end{split}
\end{equation}
The identity (\ref{reg78}) shows that $\bx$ is a linear combination of points in $E_N \cup F_N$ with nonnegative
coefficients.  Using (\ref{reg72}), the sum of the coefficients in (\ref{reg78}) is
\begin{equation*}\label{reg80}
\begin{split}
(1 - \sigma^-) \sum_{m = 1}^N x_m^+ + (1 - \sigma^+) &\sum_{n = 1}^N x_n^- 
						+ \underset{m \not= n}{\sum_{m = 1}^N \sum_{n = 1}^N} x_m^+ x_n^-\\
		&= (1 - \sigma^-) \sigma^+ + (1 - \sigma^+) \sigma^- + \sigma^+ \sigma^-\\
		&= 1 - (1 - \sigma^+)(1 - \sigma^-)\\
		&= 1.
\end{split}
\end{equation*}
It follows that $\bx$ is a convex combination of points in $E_N \cup F_N$.
We have shown that if $\bx$ is on the boundary of $K_N$, then $\bx$ is a convex combination of points in $E_N \cup F_N$.
Therefore the only extreme points of $K_N$ are the points in $E_N \cup F_N$.
\end{proof}

Let 
\begin{equation*}\label{reg82}
I = \{i_1 < i_2 < \cdots < i_L\} \subseteq \{1, 2, \dots , N\}
\end{equation*}
be a subset of positive cardinality $L$.  If $\bx = (x_n)$ is a point in $\R^N$ we write $\bx_I$ for the point in $\R^L$ given by
$\bx_I = (x_{i_{\ell}})$.  Alternatively, $\bx_I$ is the $L\times 1$ submatrix of $\bx$ having rows indexed by the integers in the subset $I$.
The following result is now an immediate consequence of Lemma \ref{lemreg1}.

\begin{corollary}\label{correg1}  Let $\bxi$ be an element in the set of extreme points $E_N \cup F_N$, and let 
\begin{equation*}\label{reg83}
I \subseteq \{1, 2, \dots , N\}
\end{equation*}
be a subset of positive cardinality $L$.  Then either $\bxi_I = \bo$ in $\Z^L$, or $\bxi_I$ belongs to the set of extreme points $E_L\cup F_L$.
\end{corollary}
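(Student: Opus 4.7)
The plan is to argue by a short case analysis on which of the two extreme-point types $\bxi$ belongs to, since the definition of $\bxi_I$ is just a coordinate restriction and $E_N\cup F_N$ is described explicitly in terms of standard basis vectors.

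First I would dispose of the case $\bxi \in E_N$. Here $\bxi = \pm\be_m$ for some $m \in \{1,2,\dots,N\}$, so the restriction $\bxi_I$ has at most one nonzero entry, located at the position of $m$ inside the ordered tuple $I = \{i_1 < i_2 < \cdots < i_L\}$. If $m \notin I$ this entry is absent and $\bxi_I = \bo$, as claimed. Otherwise $m = i_\ell$ for a unique $\ell$, and $\bxi_I = \pm \be_\ell \in E_L$.

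Next I would handle $\bxi \in F_N$, writing $\bxi = \be_m - \be_n$ with $m \neq n$. The restriction $\bxi_I$ records a $+1$ at the position (if any) of $m$ in $I$ and a $-1$ at the position of $n$ in $I$. There are four sub-cases to enumerate: (i) neither $m$ nor $n$ lies in $I$, giving $\bxi_I = \bo$; (ii) both lie in $I$, say $m = i_\ell$ and $n = i_{\ell'}$ with $\ell \neq \ell'$, giving $\bxi_I = \be_\ell - \be_{\ell'} \in F_L$; (iii) $m \in I$ but $n \notin I$, giving $\bxi_I = \be_\ell \in E_L$; (iv) $n \in I$ but $m \notin I$, giving $\bxi_I = -\be_{\ell'} \in E_L$. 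In every case $\bxi_I$ is either zero or an element of $E_L \cup F_L$.

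Since $E_N \cup F_N$ exhausts the extreme points by Lemma \ref{lemreg1}, these two cases together give the corollary. There is no genuine obstacle here; the statement is essentially a combinatorial bookkeeping remark about how the explicit list of extreme points from Lemma \ref{lemreg1} is preserved (or sent to $\bo$) under restriction to a subset of coordinates. The only mild point worth flagging is that a difference $\be_m - \be_n$ in $F_N$ can collapse to an element of $E_L$ rather than $F_L$ when exactly one of its two support indices is deleted by $I$, which is why the conclusion allows $\bxi_I \in E_L \cup F_L$ rather than separating the two types.
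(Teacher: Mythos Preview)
Your proof is correct and matches the paper's approach: the paper treats the corollary as an immediate consequence of Lemma~\ref{lemreg1} without further argument, and your case analysis on the explicit description of $E_N \cup F_N$ is exactly the intended verification.
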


Let 
\begin{equation*}\label{reg85}
\Phi_{L, N} : \R^N \times \R^N \times \cdots \times \R^N \rightarrow \R^M,\quad\text{where $M = \binom{N}{L}$},
\end{equation*}
be the continuous, alternating, multilinear function taking values in $\R^M$ and defined by
\begin{equation*}\label{reg87}
\Phi_{L, N}(\bx_1, \bx_2, \dots , \bx_L) = \bx_1 \wedge \bx_2 \wedge \cdots \wedge \bx_L.
\end{equation*}
By compactness the continuous, nonnegative function
\begin{equation*}\label{reg91}
(\bx_1, \bx_2, \dots , \bx_L) \mapsto \|\bx_1 \wedge \bx_2 \wedge \cdots \wedge \bx_L\|_1
\end{equation*}
assumes its maximum value on the $L$-fold product
\begin{equation*}\label{reg93}
K_N \times K_N \times \cdots \times K_N.
\end{equation*}
We write
\begin{equation}\label{reg95}
\mu_{L, N} = \max\big\{\|\bx_1 \wedge \bx_2 \wedge \cdots \wedge \bx_L\|_1 : \text{$\bx_{\ell} \in K_N$ for $\ell = 1, 2, \dots , L$}\big\}
\end{equation}
for this maximum value. We show that $\mu_{L, N}$ can be determined by restricting each variable $\bx_{\ell}$ to the set $E_N \cup F_N$ of 
extreme points in $K_N$.

\begin{lemma}\label{lemreg2}  There exist points $\bxi_1, \bxi_2, \dots , \bxi_L$ in the set of extreme points $E_N \cup F_N$ such that
\begin{equation}\label{reg97}
\mu_{L, N} = \|\bxi_1 \wedge \bxi_2 \wedge \cdots \wedge \bxi_L\|_1.
\end{equation}
Moreover, if $\bx_1, \bx_2, \dots , \bx_L$ are vectors in $\R^N$ then 
\begin{equation}\label{reg99}
\|\bx_1 \wedge \bx_2 \wedge \cdots \wedge \bx_L\|_1 \le \mu_{L, N}\ \delta(\bx_1) \delta(\bx_2) \cdots \delta(\bx_L).
\end{equation}
\end{lemma}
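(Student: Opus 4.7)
The plan is to reduce the optimization over the compact convex set $K_N^L$ to the finite set of extreme-point configurations by exploiting multilinearity of the exterior product, and then to conclude (\ref{reg99}) by a scaling argument.

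First I would note that by the definition (\ref{reg87}) the map $\Phi_{L,N}$ is multilinear, so for each fixed choice of $\bx_1, \ldots, \bx_{\ell-1}, \bx_{\ell+1}, \ldots, \bx_L$ in $\R^N$, the function
\begin{equation*}
F_\ell(\bx) = \|\bx_1 \wedge \cdots \wedge \bx_{\ell-1} \wedge \bx \wedge \bx_{\ell+1} \wedge \cdots \wedge \bx_L\|_1
\end{equation*}
is the $l^1$-norm of a linear map in $\bx$, hence a seminorm, and in particular a convex function on $\R^N$. A basic fact of convex analysis (Bauer's maximum principle; equivalently, any convex function on a compact convex set attains its supremum at an extreme point) then implies that the maximum of $F_\ell$ on the compact convex set $K_N$ is attained at some extreme point of $K_N$, which by Lemma \ref{lemreg1} belongs to $E_N \cup F_N$.

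Next I would apply this one coordinate at a time. By continuity of $\Phi_{L,N}$ and compactness of $K_N^L$, the maximum $\mu_{L,N}$ in (\ref{reg95}) is actually achieved at some tuple $(\bx_1^\ast, \ldots, \bx_L^\ast) \in K_N^L$. Fixing $\bx_2^\ast, \ldots, \bx_L^\ast$, the convexity observation above produces an extreme point $\bxi_1 \in E_N \cup F_N$ with
\begin{equation*}
\|\bxi_1 \wedge \bx_2^\ast \wedge \cdots \wedge \bx_L^\ast\|_1 \ge \|\bx_1^\ast \wedge \bx_2^\ast \wedge \cdots \wedge \bx_L^\ast\|_1 = \mu_{L,N},
\end{equation*}
and since $\mu_{L,N}$ is the overall maximum, equality must hold. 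Replacing $\bx_1^\ast$ by $\bxi_1$, then repeating the argument in the second slot to produce $\bxi_2 \in E_N \cup F_N$, and so on through all $L$ coordinates, gives points $\bxi_1, \bxi_2, \ldots, \bxi_L \in E_N \cup F_N$ satisfying (\ref{reg97}).

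Finally, for the inequality (\ref{reg99}), if some $\bx_\ell = \bo$ then both sides vanish, so we may assume $\delta(\bx_\ell) > 0$ for every $\ell$. Since $\delta$ is a norm with unit ball $K_N$, the normalized vectors $\bx_\ell / \delta(\bx_\ell)$ lie in $K_N$. Multilinearity of the exterior product together with homogeneity of the $l^1$-norm yields
\begin{equation*}
\|\bx_1 \wedge \cdots \wedge \bx_L\|_1 = \delta(\bx_1) \cdots \delta(\bx_L)\, \Bigl\|\tfrac{\bx_1}{\delta(\bx_1)} \wedge \cdots \wedge \tfrac{\bx_L}{\delta(\bx_L)}\Bigr\|_1 \le \mu_{L,N}\, \delta(\bx_1) \cdots \delta(\bx_L),
\end{equation*}
by the defining property of $\mu_{L,N}$. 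There is no substantive obstacle here: the only non-trivial ingredient is Bauer's maximum principle, and the rest is bookkeeping with multilinearity and homogeneity.
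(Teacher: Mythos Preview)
Your proposal is correct and follows essentially the same approach as the paper. The only cosmetic difference is that the paper, rather than invoking Bauer's maximum principle by name, carries out the convex-combination argument explicitly: it selects a maximizing $L$-tuple with the greatest number of extreme points and, if some $\bbeta_1$ is not extreme, writes $\bbeta_1 = \sum_j \theta_j \bu_j$ as a convex combination of extreme points and uses the triangle inequality to force $\|\bu_j \wedge \bbeta_2 \wedge \cdots \wedge \bbeta_L\|_1 = \mu_{L,N}$ for every $j$, contradicting maximality. Your iterative slot-by-slot replacement and the paper's maximality-plus-contradiction are two standard packagings of the same idea, and your treatment of (\ref{reg99}) by normalization is identical to the paper's.
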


\begin{proof}  Let $\bbeta_1, \bbeta_2, \dots , \bbeta_L$, be points in $K_N$ such that 
\begin{equation}\label{reg100}
\mu_{L, N} = \bigl\|\bbeta_1 \wedge \bbeta_2 \wedge \cdots \wedge \bbeta_L\bigr\|_1.
\end{equation}
Because $\Phi_{L, N}$ is linear in each variable, it is easy to show that $\delta\bigl(\bbeta_{\ell}\bigr) = 1$ for each
$\ell = 1, 2, \dots , L$.  Also, among all the collections of $L$ points from the boundary of $K_N$ that satisfy (\ref{reg100}), we
may assume that the collection $\bbeta_1, \bbeta_2, \dots , \bbeta_L$ contains the maximum 
number of extreme points.  If this maximum number is $L$ then we are done.  Therefore we may assume that the maximum number 
of extreme points is less than $L$.

If, for example, $\bbeta_1$ is not an extreme point, then there exist extreme points $\bu_1, \bu_2, \dots , \bu_J$
in $K_N$, and positive numbers $\theta_1, \theta_2, \dots , \theta_J$, such that
\begin{equation*}\label{reg102}
\bbeta_1 = \sum_{j = 1}^J \theta_j \bu_j,\quad\text{and}\quad \sum_{j = 1}^J \theta_j = 1.
\end{equation*}
It follows that
\begin{equation}\label{reg104}
\begin{split}
\mu_{L, N} &= \biggl\|~\sum_{j = 1}^J\theta_j \bigl(\bu_j \wedge \bbeta_2 \wedge \cdots \wedge \bbeta_L\bigr)\biggr\|_1\\
                  &\le \sum_{j = 1}^J \theta_j \bigl\|\bu_j \wedge \bbeta_2 \wedge \cdots \wedge \bbeta_L\bigr\|_1\\
                  &\le \mu_{L, N} \sum_{j = 1}^J\theta_j\\
                  &= \mu_{L, N}
\end{split}
\end{equation}
Hence there is equality throughout the inequality (\ref{reg104}), and we conclude that
\begin{equation*}\label{reg106}
\mu_{L, N} = \bigl\|\bu_j \wedge \bbeta_2 \wedge \cdots \wedge \bbeta_L\bigr\|_1
\end{equation*}
for each $j = 1, 2, \dots , J$.  But each collection of points $\bu_j, \bbeta_2, \dots , \bbeta_L,$ plainly contains
one more extreme point than the collection $\bbeta_1, \bbeta_2, \dots , \bbeta_L$.  The contradiction shows
that there exists a collection of points $\bxi_1, \bxi_2, \dots , \bxi_L$, from the boundary of $K_N$, such that (\ref{reg97}) holds
and each $\bxi_{\ell}$ is an extreme point of $K_N$.

Next we verify the inequality (\ref{reg99}).  If one of the vectors in the collection $\bx_1, \bx_2, \dots , \bx_L$ is the zero vector, then both 
sides of (\ref{reg99}) are zero.  Thus we may assume that $\bx_{\ell} \not= \bo$ for each $\ell = 1, 2, \dots , L$.  Let 
\begin{equation}\label{reg112}
\bwy_{\ell} = \delta\bigl(\bx_{\ell}\bigr)^{-1} \bx_{\ell},
\end{equation}
so that $\delta\bigl(\bwy_{\ell}\bigr) = 1$ for each $\ell = 1, 2, \dots , L$.  Then we certainly have
\begin{equation}\label{reg114}
\bigl\|\bwy_1 \wedge \bwy_2 \wedge \cdots \wedge \bwy_L\bigr\|_1 \le \mu_{L, N}
\end{equation}
by the definition of $\mu_{L, N}$.  Then (\ref{reg99}) follows using (\ref{reg112}), (\ref{reg114}), and the multilinearity of the
exterior product.
\end{proof}

The extreme points $E_N \cup F_N$ for the $\delta$-unit ball $K_N$ have the following useful property.

\begin{lemma}\label{lemreg3}  Let $\bxi_1, \bxi_2, \dots , \bxi_L$ be extreme points in the set $E_N\cup F_N$, and let
\begin{equation*}\label{turn10}
\Xi = \bigl(\bxi_1\ \bxi_2\ \cdots \ \bxi_L\bigr)
\end{equation*}
be the $N\times L$ matrix having $\bxi_1, \bxi_2, \dots , \bxi_L$ as columns.  If
\begin{equation*}\label{turn15}
I \subseteq \{1, 2, \dots , N\}
\end{equation*}
is a subset of cardinality $|I| = L$, and $\Xi_I$ is the $L \times L$ submatrix having rows indexed by $I$, then the integer
$\det \Xi_I$ belongs to the set $\{-1, 0, 1\}$.
\end{lemma}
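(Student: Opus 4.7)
My plan is to use Corollary \ref{correg1} to identify the structural form of $\Xi_I$. By that corollary, each column $(\bxi_\ell)_I$ of $\Xi_I$ is either $\bo$, a signed unit vector $\pm \be_m$ in $\R^L$, or a signed difference $\be_m - \be_n$ with $m \ne n$. If any column equals $\bo$ then $\det \Xi_I = 0$ and we are done, so assume that every column of $\Xi_I$ lies in $E_L \cup F_L$.

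I then prove $\det \Xi_I \in \{-1, 0, 1\}$ by induction on $L$. The base case $L = 1$ is immediate, since $F_1 = \emptyset$ and $\Xi_I = (\pm 1)$. For the inductive step, two cases arise. If every column of $\Xi_I$ lies in $F_L$, then each column has entries summing to $0$; equivalently $\bone^T \Xi_I = \bo^T$, so the rows of $\Xi_I$ are linearly dependent and $\det \Xi_I = 0$. Otherwise some column equals $\pm \be_m$; expanding the determinant along this column gives $\det \Xi_I = \pm \det M$, where $M$ is the $(L-1) \times (L-1)$ submatrix obtained by striking out row $m$ and that column. A second application of Corollary \ref{correg1}, now with $L$ in the role of $N$ and the index set $\{1, 2, \dots, L\} \setminus \{m\}$ of cardinality $L-1$, shows that each column of $M$ is either $\bo$ or an extreme point in $E_{L-1} \cup F_{L-1}$. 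The induction hypothesis then yields $\det M \in \{-1, 0, 1\}$, and hence $\det \Xi_I \in \{-1, 0, 1\}$.

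There is no serious obstacle here. The only substantive point is the handling of the ``$F_L$-only'' case, which closes because every $\be_m - \be_n$ lies in the hyperplane $\sum_i x_i = 0$, so the all-ones functional supplies a nontrivial linear relation among the rows of $\Xi_I$. This is precisely what prevents the induction from stalling on a matrix consisting entirely of difference columns. Equivalently, one may recognize $\Xi_I$ as an oriented incidence-type matrix and invoke total unimodularity, but the direct inductive argument above is essentially as short and uses only Corollary \ref{correg1}.
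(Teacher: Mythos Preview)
Your proof is correct, but it takes a different route from the paper's. The paper's argument is a one-liner once Corollary \ref{correg1} is in hand: if no column of $\Xi_I$ is $\bo$, then every column lies in $E_L \cup F_L$ and hence has $\delta$-norm equal to $1$; Schinzel's determinant inequality (\ref{norm47}) then gives $\bigl|\det \Xi_I\bigr| \le \prod_{\ell} \delta\bigl((\bxi_\ell)_I\bigr) = 1$, and integrality finishes the job.

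Your inductive argument instead proves the total unimodularity of $\Xi_I$ directly, without invoking Schinzel's inequality. The crucial observation that makes the induction go through---namely that if every column lies in $F_L$ then $\bone^T \Xi_I = \bo^T$ forces $\det \Xi_I = 0$---is exactly the structural fact that underlies the standard total unimodularity proof for oriented incidence matrices, as you note. What you gain is self-containment: the paper appeals to (\ref{norm47}), which is quoted from \cite{schinzel1978}, whereas your argument needs only Corollary \ref{correg1}. What the paper's approach buys is brevity and a tighter connection to the $\delta$-norm framework that drives the rest of the section. Both arguments are short; the choice is largely a matter of taste.

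One minor stylistic point: in the inductive step you should state explicitly that if some column of $M$ is $\bo$ then $\det M = 0$, and otherwise the induction hypothesis applies. You essentially say this, but the sentence ``The induction hypothesis then yields $\det M \in \{-1,0,1\}$'' reads as if the hypothesis alone covers the $\bo$ case.
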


\begin{proof}  It is clear that the columns of the $L \times L$ submatrix $\Xi_I$ are the $L \times 1$ column vectors 
$(\bxi_1)_I, (\bxi_2)_I, \dots , (\bxi_L)_I$.  If a column of $\Xi_I$ is $\bo$, then $\det \Xi_I = 0$ is obvious.
If each column of $\Xi_I$ is not $\bo$, then it follows from Corollary \ref{correg1} that each column of $\Xi_I$ belongs to the set 
of extreme points $E_L \cup F_L$.  Applying Schinzel's determinant inequality (\ref{norm47}) to the matrix $\Xi_I$, we get
\begin{equation*}\label{turn20}
\bigl|\det \Xi_I\bigr| \le \delta\bigl((\bxi_1)_I\bigr) \delta\bigl((\bxi_2)_I\bigr) \cdots \delta\bigl((\bxi_L)_I\bigr) = 1.
\end{equation*}
As $\det \Xi_I$ is an integer, the lemma is proved.
\end{proof}

If $\bxi_1, \bxi_2, \dots , \bxi_L$ are extreme points in $E_N \cup F_N$, then it follows from Lemma \ref{lemreg3} that
\begin{equation}\label{turn28}
\big\|\bxi_1 \wedge \bxi_2 \wedge \cdots \wedge \bxi_L\big\|_1 
	= \sum_{\substack{I \subseteq \{1, 2, \dots , N\}\\|I| = L}} \bigl|\det \Xi_I\bigr| \le \binom{N}{L}.
\end{equation}
Using (\ref{reg97}) we get the simple upper bound
\begin{equation}\label{turn31}
\mu_{L, N} \le \binom{N}{L}\quad\text{for $1 \le L \le N$}.
\end{equation}
It follows from (\ref{norm47}) that there is equality in (\ref{turn31}) when $L = N$.  There is also equality in (\ref{turn31}) when $L + 1 = N$;
this follows from the example
\begin{equation*}\label{turn34}
\Xi = \begin{pmatrix}  1& 0& 0& \dots& 0& 0\\
                                   0& 1& 0& \dots & 0& 0\\ 
                                   0& 0& 1& \dots & 0& 0\\
                                   0& 0& 0& \dots & 0& 0\\
                   \vdots& \vdots& \vdots& \cdots & \vdots& \vdots\\
                                   0& 0& 0& \dots & 1& 0\\
                                   0& 0& 0& \dots & 0& 1\\
                                  -1& -1& -1& \dots & -1& -1\\
                                     \end{pmatrix}.
\end{equation*}
By squaring each of the subdeterminants in the sum (\ref{turn28}) we can determine the value of $\mu_{L, N}$ for $2L \le N$.

\begin{lemma}\label{lemreg4}  If $1 \le L < N$ then
\begin{equation}\label{turn40}
\mu_{L, N} \le 2^L.
\end{equation}
Moreover, if $2L \le N$ then there is equality in the inequality {\rm (\ref{turn40})}.
\end{lemma}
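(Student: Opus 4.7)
The plan is to reduce to extreme points via Lemma \ref{lemreg2}, then combine the fact (from Lemma \ref{lemreg3}) that each $L\times L$ subdeterminant of a matrix of extreme points is $0, 1,$ or $-1$ with the Cauchy--Binet formula and Hadamard's inequality for positive semidefinite matrices.

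By Lemma \ref{lemreg2} there exist $\bxi_1, \ldots, \bxi_L \in E_N \cup F_N$ with $\mu_{L,N} = \|\bxi_1 \wedge \cdots \wedge \bxi_L\|_1$. Let $\Xi = (\bxi_1\ \bxi_2\ \cdots\ \bxi_L)$ be the $N \times L$ matrix of columns. By (\ref{turn28}) and Lemma \ref{lemreg3}, each $\det \Xi_I \in \{-1, 0, 1\}$, so $|\det \Xi_I| = (\det \Xi_I)^2$, and consequently
\begin{equation*}
\mu_{L,N} = \sum_{\substack{I \subseteq \{1, \ldots, N\} \\ |I| = L}} \bigl(\det \Xi_I\bigr)^2.
\end{equation*}
By the Cauchy--Binet formula, the right-hand side equals $\det(\Xi^T \Xi)$. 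Since $\Xi^T \Xi$ is positive semidefinite, Hadamard's inequality gives
\begin{equation*}
\det(\Xi^T \Xi) \le \prod_{\ell = 1}^L \bigl(\Xi^T \Xi\bigr)_{\ell \ell} = \prod_{\ell = 1}^L \|\bxi_\ell\|_2^2.
\end{equation*}
Every extreme point in $E_N \cup F_N$ has squared Euclidean norm at most $2$ (the points $\pm \be_m$ have squared norm $1$, and the points $\be_m - \be_n$ have squared norm $2$). Therefore $\mu_{L,N} \le 2^L$, establishing (\ref{turn40}).

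For the equality assertion when $2L \le N$, I would exhibit an explicit collection achieving the bound. Take
\begin{equation*}
\bxi_\ell = \be_{2\ell - 1} - \be_{2\ell}, \quad \text{for } \ell = 1, 2, \ldots, L,
\end{equation*}
which lies in $F_N$ since $2L \le N$. The supports of these vectors are pairwise disjoint, so $\Xi^T \Xi = 2 I_L$, and the preceding calculation gives $\|\bxi_1 \wedge \cdots \wedge \bxi_L\|_1 = \det(\Xi^T \Xi) = 2^L$. Combined with the inequality already proved, this forces $\mu_{L, N} = 2^L$.

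The main conceptual step is recognizing that the restriction of $\det \Xi_I$ to values in $\{-1, 0, 1\}$ is exactly what lets us square without loss and convert the $l^1$-norm of the exterior product into the Gram determinant, after which Hadamard's inequality for PSD matrices (together with the easy bound $\|\bxi\|_2^2 \le 2$ on extreme points) does the work. The equality case is then routine, since the block-diagonal structure of the chosen $\Xi$ makes $\Xi^T \Xi$ diagonal.
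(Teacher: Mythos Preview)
Your proof is correct and follows essentially the same route as the paper's: reduce to extreme points, use Lemma~\ref{lemreg3} to replace $|\det\Xi_I|$ by $(\det\Xi_I)^2$, apply Cauchy--Binet to obtain $\det(\Xi^T\Xi)$, and then Hadamard's inequality together with $\|\bxi_\ell\|_2^2\le 2$. Your equality example $\bxi_\ell=\be_{2\ell-1}-\be_{2\ell}$ is exactly the matrix the paper displays in (\ref{turn55}).
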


\begin{proof}  Let $\bxi_1, \bxi_2, \dots , \bxi_L$ be extreme points in $E_N \cup F_N$, and let
\begin{equation*}\label{turn25}
\Xi = \bigl(\bxi_1\ \bxi_2\ \cdots \ \bxi_L\bigr)
\end{equation*}
be the $N\times L$ matrix having $\bxi_1, \bxi_2, \dots , \bxi_L$ as columns.  It follows from Lemma \ref{lemreg3} that
\begin{equation*}\label{turn33}
\big\|\bxi_1 \wedge \bxi_2 \wedge \cdots \wedge \bxi_L\big\|_1 = \sum_{\substack{I \subseteq \{1, 2, \dots , N\}\\|I| = L}} \bigl|\det \Xi_I\bigr|
		= \sum_{\substack{I \subseteq \{1, 2, \dots , N\}\\ |I| = L}} \bigl(\det \Xi_I\bigr)^2.
\end{equation*}
Then from the Cauchy-Binet identity we get
\begin{equation}\label{turn35}
\big\|\bxi_1 \wedge \bxi_2 \wedge \cdots \wedge \bxi_L\big\|_1 
		= \sum_{\substack{I \subseteq \{1, 2, \dots , N\}\\ |I| = L}} \bigl(\det \Xi_I\bigr)^2 = \det\bigl(\Xi^T \Xi\bigr).
\end{equation}
The $L \times L$ matrix in the determinant on the right of (\ref{turn35}) is
\begin{equation*}\label{turn39}
\Xi^T \Xi = \bigl(\bxi_k^T \bxi_{\ell}\bigr),
\end{equation*}
where $k = 1, 2, \dots , L$ indexes rows and $\ell = 1, 2, \dots , L$ indexes columns.  As $\Xi^T \Xi$ is an $L \times L$ real, symmetric
matrix, we can apply Hadamard's inequality to estimate its determinant.  We find that
\begin{equation}\label{turn50}
\big\|\bxi_1 \wedge \bxi_2 \wedge \cdots \wedge \bxi_L\big\|_1 = \det\bigl(\Xi^T \Xi\bigr) \le \prod_{\ell = 1}^L \|\bxi_{\ell}\|_2^2 \le 2^L.
\end{equation}
This proves the inequality (\ref{turn40}).

If the columns of the matrix $\Xi$ are orthogonal, then there is equality in Hadamard's inequality.  Therefore, if $2L \le N$ we
select $\bxi_1, \bxi_2, \dots , \bxi_L$ in $F_N$ so that
\begin{equation*}\label{turn55}
\Xi = \begin{pmatrix}  1& 0& 0& \dots& 0& 0\\
                       -1& 0& 0& \dots & 0& 0\\ 
                        0& 1& 0& \dots & 0& 0\\
                        0& -1& 0& \dots & 0& 0\\
                        0& 0& 1& \dots & 0& 0\\
                        0& 0& -1& \dots & 0& 0\\
        \vdots& \vdots& \vdots& \cdots & \vdots& \vdots\\
                        0& 0& 0& \dots & 1& 0\\
                        0& 0& 0& \dots & -1& 0\\
                        0& 0& 0& \dots & 0& 1\\
                        0& 0& 0& \dots & 0& -1\\
                        0& 0& 0& \dots & 0& 0\\
         \vdots& \vdots& \vdots& \cdots & \vdots& \vdots\\
                         0& 0& 0& \dots & 0& 0\\
                        \end{pmatrix}.
\end{equation*}
For this choice of $\Xi$ the columns of $\Xi$ are orthogonal.  Hence for this choice of $\Xi$ there is equality in (\ref{turn50}), and equality 
in (\ref{turn40}).
\end{proof}

If $\bx_1, \bx_2, \dots , \bx_L$ belong to $\R^N$ and $2L \le N$, then it follows from (\ref{reg99}) and the case of equality in (\ref{turn40}) that
\begin{equation}\label{reg271}
\|\bx_1 \wedge \bx_2 \wedge \cdots \wedge \bx_L\|_1 \le 2^L\ \delta(\bx_1) \delta(\bx_2) \cdots \delta(\bx_L).
\end{equation}
This proves the inequality (\ref{norm41}) in the statement of Theorem \ref{thmnorm1}.  

The following lemma, together with combinatorial arguments in section 4, will be used in the proof of the inequality (\ref{norm38}).

\begin{lemma}\label{lemreg5}  Let $\bxi_1, \bxi_2, \dots , \bxi_L$ be linearly independent extreme points in the set $E_N\cup F_N$.  Assume 
that exactly $K$ of the points $\bxi_1, \bxi_2, \dots , \bxi_L$ belong to the subset $E_N$, where $1 \le K < L$.  Then there exist linearly 
independent extreme points 
$\bbeta_1, \bbeta_2, \dots , \bbeta_{L-K}$ in the set $E_{N-K} \cup F_{N-K}$ such that
\begin{equation*}\label{reg275}
\|\bxi_1 \wedge \bxi_2 \wedge \cdots \wedge \bxi_L \|_1 = \|\bbeta_1 \wedge \bbeta_2 \wedge \cdots \wedge \bbeta_{L-K}\|_1.
\end{equation*}
\end{lemma}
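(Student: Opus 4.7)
The plan is to reduce to a smaller-dimensional situation by exploiting the fact that each of the $K$ extreme points from $E_N$ has the form $\pm \be_{m_i}$, and then deleting those $K$ coordinates from the remaining $L-K$ vectors. After a harmless relabeling, I may assume $\bxi_i = \vep_i \be_{m_i}$ with $\vep_i \in \{-1, +1\}$ for $i = 1, 2, \dots, K$ and $\bxi_i \in F_N$ for $i = K+1, \dots, L$. The indices $m_1, \dots, m_K$ are necessarily distinct, for otherwise two of the first $K$ vectors would be collinear, contradicting independence. Put $M = \{m_1, \dots, m_K\}$ and let $\bbeta_j \in \R^{N-K}$ be the vector obtained from $\bxi_{K+j}$ by striking out the coordinates indexed by $M$, after identifying $\{1, \dots, N\} \setminus M$ with $\{1, \dots, N-K\}$.

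Next I would verify that each $\bbeta_j$ is a nonzero element of $E_{N-K} \cup F_{N-K}$ and that $\bbeta_1, \dots, \bbeta_{L-K}$ are linearly independent. Write $\bxi_{K+j} = \be_p - \be_q$ with $p \ne q$. If both $p$ and $q$ lay in $M$, then $\bxi_{K+j}$ would lie in $\spann\{\bxi_1, \dots, \bxi_K\}$, violating linear independence; hence $\bbeta_j \ne \bo$, and Corollary \ref{correg1} places it in $E_{N-K} \cup F_{N-K}$. For the linear independence of the $\bbeta_j$, any relation $\sum_j c_j \bbeta_j = \bo$ forces $\sum_j c_j \bxi_{K+j}$ to be supported on $M$, and hence to lie in $\spann\{\bxi_1, \dots, \bxi_K\}$; the global linear independence of $\bxi_1, \dots, \bxi_L$ then forces every $c_j = 0$.

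The final step is the norm identity via a Laplace expansion. Let $\Xi$ be the $N \times L$ matrix with columns $\bxi_1, \dots, \bxi_L$ and $B$ the $(N-K) \times (L-K)$ matrix with columns $\bbeta_1, \dots, \bbeta_{L-K}$. For any subset $I \subseteq \{1, \dots, N\}$ with $|I| = L$, the submatrix $\Xi_I$ is singular unless $M \subseteq I$, because for $i \le K$ the $i$th column of $\Xi$ has its only nonzero entry $\vep_i$ in row $m_i$. When $M \subseteq I$, writing $J = I \setminus M$ and expanding along the first $K$ columns produces $|\det \Xi_I| = |\det B_J|$. Summing over all relevant $I$ gives
\begin{equation*}
\|\bxi_1 \wedge \cdots \wedge \bxi_L\|_1 = \sum_{\substack{I \supseteq M \\ |I| = L}} |\det \Xi_I| = \sum_{\substack{J \subseteq \{1,\dots,N\} \setminus M \\ |J| = L-K}} |\det B_J| = \|\bbeta_1 \wedge \cdots \wedge \bbeta_{L-K}\|_1,
\end{equation*}
which is the required identity. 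The most delicate step is the Laplace expansion; tracking signs is routine since each of the first $K$ columns has a unique nonzero entry, and only the absolute values enter the $l^1$ norm in any case.
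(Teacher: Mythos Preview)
Your proposal is correct and follows essentially the same route as the paper: reorder so the $E_N$-vectors come first, set $M=\{m_1,\dots,m_K\}$, observe that $\det\Xi_I\ne 0$ forces $M\subseteq I$, and then a Laplace expansion along the first $K$ columns yields $|\det\Xi_I|=|\det B_{I\setminus M}|$, whence the $l^1$-norms agree. The only minor difference is that you verify $\bbeta_j\ne\bo$ and the linear independence of the $\bbeta_j$ directly from the independence of the $\bxi_\ell$, whereas the paper deduces their independence a posteriori from the norm identity (\ref{reg275}) being positive.
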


\begin{proof}  By using a suitable permutation of the points $\bxi_1, \bxi_2, \dots , \bxi_L$, we may assume that 
\begin{equation*}\label{reg285}
\{\bxi_1, \bxi_2, \dots , \bxi_K\} \subseteq E_N,\quad\text{and}\quad \{\bxi_{K+1}, \bxi_{K+2}, \dots , \bxi_L\} \subseteq F_N.
\end{equation*}
And we may further assume that for $k = 1, 2, \dots , K$ we have
\begin{equation*}\label{reg288}
\bxi_k = \pm \be_{m_k},\quad\text{where $1 \le m_1 < m_2 < \cdots < m_K \le N$}.
\end{equation*}
It will be convenient to write
\begin{equation*}\label{reg291}
M = \{m_1, m_2, \dots , m_K\}.
\end{equation*}

Now let
\begin{equation*}\label{reg280}
\Xi = \bigl(\bxi_1\ \bxi_2\ \cdots \ \bxi_L\bigr)
\end{equation*}
be the $N\times L$ matrix having $\bxi_1, \bxi_2, \dots , \bxi_L$ as columns.  We partition $\Xi$ into submatrices
\begin{equation*}\label{turn117}
\Xi = \bigl(U\ V\bigr),
\end{equation*}
where
\begin{equation*}\label{turn121}
U = \bigl(\bxi_1\ \bxi_2\ \cdots\ \bxi_K\bigr),\quad\text{and}\quad V = \bigl(\bxi_{K+1}\ \bxi_{K+2}\ \cdots\ \bxi_L\bigr)
\end{equation*}
are $N\times K$ and $N\times (L-K)$, respectively.  We suppose that $I \subseteq \{1, 2, \dots , N\}$ is a subset of cardinality $|I| = L$ such that
\begin{equation}\label{turn131}
\det \Xi_I = \det \bigl(U_I\ V_I\bigr) \not= 0.
\end{equation} 
On the right of (\ref{turn131}) the submatrix $U_I$ is $L \times K$ and the submatrix $V_I$ is $L \times (L - K)$.
If the integer $m_k$, which occurs in $M$, does not belong to $I$, then the $k$-th column of $\Xi_I$ is identically zero and 
(\ref{turn131}) cannot hold.  Therefore (\ref{turn131}) implies that
\begin{equation*}\label{turn137}
M \subseteq I.
\end{equation*}
Next we apply the Laplace expansion of the determinant to $\Xi_I$ partitioned as in (\ref{turn131}).  In view of our previous remarks we find that
\begin{equation}\label{turn143}
\det \Xi_I = \sum_{\substack{J \subseteq I\\|J| = K}} (-1)^{\vep(J)} \bigl(\det U_J\bigr) \bigl(\det V_{\widetilde{J}}\bigr),
\end{equation}
where 
\begin{equation*}\label{turn148}
\widetilde{J} = I \setminus J
\end{equation*}
is the complement of $J$ in $I$, and $\vep(J)$ is an integer that depends on $J$.  As before, if the integer $m_k$ which occurs in $M$
does not belong to the subset $J$, then the $k$-th column of $U_J$ is identically zero and therefore $\det U_J = 0$.  As $|J| = |M| = K$, 
we conclude that there is exactly one nonzero term in the sum on the right of (\ref{turn143}), and the nonzero term occurs when $J = M$.
From these observations we conclude that the Laplace expansion (\ref{turn143}) is simply
\begin{equation}\label{turn163}
\det \Xi_I =  (-1)^{\vep(M)} \bigl(\det U_M\bigr) \bigl(\det V_{I \setminus M}\bigr).
\end{equation}
It is obvious that $\det U_M = \pm 1$, and therefore (\ref{turn163}) leads to the identity
\begin{equation*}\label{turn170}
\bigl|\det \Xi_I\bigr| = \bigl|\det V_{I \setminus M}\bigr|.
\end{equation*}

Let 
\begin{equation*}\label{turn185}
V^{\prime} = \bigl(\bxi_{K+1}^{\prime}\ \bxi_{K+2}^{\prime}\ \cdots \ \bxi_L^{\prime}\bigr)
\end{equation*}
be the $(N-K) \times (L-K)$ submatrix of $V$ obtained by removing the rows of $V$ that are indexed by the integers $m_k$ in the subset $M$.
It follows from Lemma \ref{lemreg2} that the columns of $V^{\prime}$ belong to the set of extreme points $E_{N-K} \cup F_{N-K}$.  Moreover, we have
\begin{equation}\label{turn190}
\bigl|\det \Xi_I\bigr| = \bigl|\det V_{I \setminus M}\bigr| = \bigl|\det V_J^{\prime}\bigr|,
\end{equation}
where
\begin{equation*}\label{turn195}
J = I \setminus M \subseteq \{1, 2, \dots , N\} \setminus M,\quad\text{and}\quad |J| = L - K.
\end{equation*}
We note that 
\begin{equation*}\label{turn205}
I \mapsto J = I \setminus \{m_1, m_2, \dots , m_K\}
\end{equation*}
is a bijection from the collection of subsets $I$ that contain $M$ onto the collection of subsets of $\{1, 2, \dots , N\} \setminus M$ that have 
cardinality $L - K$.  Using (\ref{turn190}) we find that
\begin{equation}\label{turn210}
\sum_{\substack{I \subseteq \{1, 2, \dots , N\}\\M \subseteq I}} \bigl|\det \Xi_I\bigr| 
			= \sum_{\substack{J \subseteq \{1, 2, \dots , N\} \setminus M\\|J| = L-K}} \bigl|\det V_J^{\prime}\bigr|.
\end{equation}
Because the rows of $V^{\prime}$ are indexed by the elements of the set $\{1, 2, \dots , N\} \setminus M$, it follows from (\ref{turn210}) that
\begin{equation}\label{turn215}
\begin{split}
\|\bxi_1 \wedge \bxi_2 \wedge \cdots \wedge \bxi_L\|_1 &= \sum_{\substack{I \subseteq \{1, 2, \dots , N\}\\M \subseteq I}} \bigl|\det \Xi_I\bigr|\\ 
			                                            &= \sum_{\substack{J \subseteq \{1, 2, \dots , N\} \setminus M\\|J| = L-K}} \bigl|\det V_J^{\prime}\bigr|\\
			                                            &= \|\bxi_{K+1}^{\prime} \wedge \bxi_{K+2}^{\prime} \wedge \cdots \wedge \bxi_L^{\prime}\|_1.
\end{split}
\end{equation}
As the columns of $V^{\prime}$ belong to $E_{N-K} \cup F_{N-K}$ and satisfy (\ref{turn215}), they are linearly independent.  Therefore we set
\begin{equation*}\label{trun220}
\bbeta_{\ell} = \bxi_{K + \ell}^{\prime},\quad\text{for $\ell = 1, 2, \dots , L-K$},
\end{equation*}
and the lemma is proved.
\end{proof}

\section{Generalization of Schinzel's inequality, II}

In this section we develop a combinatorial method which leads to an asymptotically sharp upper bound for the quantity
$\mu_{L, N}$ defined in (\ref{reg95}).  The bound we prove here applies when $L < N \le 2L$, and will be used to verify the inequality
(\ref{norm38}) in the statement of Theorem \ref{thmnorm1}.

We suppose throughout this section that
\begin{equation}\label{group95}
\big\{S(1), S(2), S(3), \dots , S(L)\big\}
\end{equation}
is a collection of $L$ distinct subsets of $\{1, 2, \dots , N\}$ such that
\begin{equation}\label{group100}
|S(\ell)| = 2,\quad\text{for each $\ell = 1, 2, \dots , L$},
\end{equation}
and
\begin{equation}\label{group105}
\bigcup_{\ell = 1}^L S(\ell)= \{1, 2, \dots , N\}.
\end{equation}
It follows from (\ref{group100}) and (\ref{group105}) that
\begin{equation*}\label{group106}
N \le 2L \le N(N - 1),
\end{equation*}
but for our later applications we will make the more restrictive assumption that
\begin{equation}\label{group107}
L < N \le 2L.
\end{equation}

Let $\A$ be the collection of {\it all} subsets $A \subseteq \{1, 2, \dots , N\}$.  We define a map $\eta : \A \rightarrow \A$ by
\begin{equation}\label{group135}
\eta(A) = \bigcup_{\substack{\ell = 1\\ S(\ell) \cap A \not= \emptyset}}^L S(\ell).
\end{equation}
Then it follows from (\ref{group105}) that
\begin{equation}\label{group140}
A \subseteq \eta(A),\quad\text{for each subset $A \in \A$}.
\end{equation}
We are interested in subsets $A$ in $\A$ that satisfy $\eta(A) = A$.  Obviously $\emptyset$ and $\{1, 2, \dots , N\}$ have
this property.  More generally we define
\begin{equation}\label{group142}
\pP = \big\{A \in \A : \eta(A) = A\big\}.
\end{equation}
If $A$ belongs to the collection $\pP$ and $S(\ell) \cap A \not= \emptyset$, then $S(\ell) \subseteq A$.  Thus a nonempty subset
$A$ in $\pP$ must have $2 \le |A|$.  We show that the collection $\pP$ forms an algebra of subsets.
 
\begin{lemma}\label{lemgroup1}  Let $\pP \subseteq \A$ be the collection of subsets defined by {\rm (\ref{group142})}.
\begin{itemize}
\item[(i)]  If $A_1$ belongs to $\pP$ then its complement
\begin{equation*}\label{group144}
A_2 = \{1, 2, \dots , N\} \setminus A_1
\end{equation*}
also belongs to $\pP$.
\item[(ii)]  If $A_3$ and $A_4$ belong to $\pP$ then $A_3 \cup A_4$ belongs to $\pP$.
\item[(iii)]  If $A_5$ and $A_6$ belong to $\pP$ then $A_5 \cap A_6$ belongs to $\pP$.
\end{itemize}
\end{lemma}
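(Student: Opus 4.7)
The plan is to reformulate the condition $A \in \pP$ into a more symmetric form and then check each of the three closure properties directly. The key reformulation is: since each $S(\ell)$ has exactly two elements, the condition $\eta(A) = A$ together with the containment $A \subseteq \eta(A)$ from (\ref{group140}) is equivalent to the statement that for every $\ell \in \{1, 2, \dots, L\}$, either $S(\ell) \subseteq A$ or $S(\ell) \cap A = \emptyset$; equivalently, no $S(\ell)$ straddles $A$ and its complement. I would first record this reformulation as a preliminary observation, since it is what makes all three parts transparent.

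For part (i), the reformulated condition is manifestly symmetric in $A_1$ and $A_1^c$: ``$S(\ell) \subseteq A_1$ or $S(\ell) \subseteq A_1^c$'' says exactly the same thing about $A_1$ and $A_1^c$. So if $A_1 \in \pP$ then $A_2 = \{1, 2, \dots, N\} \setminus A_1 \in \pP$ immediately.

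For part (ii), I would fix $\ell$ and check that $S(\ell)$ does not straddle $A_3 \cup A_4$. Suppose $S(\ell) \cap (A_3 \cup A_4) \not= \emptyset$; then $S(\ell)$ meets $A_3$ or $A_4$, say $A_3$. Since $A_3 \in \pP$, the reformulated condition forces $S(\ell) \subseteq A_3 \subseteq A_3 \cup A_4$. Thus $S(\ell) \subseteq A_3 \cup A_4$ whenever $S(\ell)$ meets $A_3 \cup A_4$, which gives $A_3 \cup A_4 \in \pP$.

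For part (iii), the cleanest route is to combine (i) and (ii) via De Morgan's law: write
\begin{equation*}
A_5 \cap A_6 = \bigl(A_5^c \cup A_6^c\bigr)^c,
\end{equation*}
and apply (i) to get $A_5^c, A_6^c \in \pP$, then (ii) to get $A_5^c \cup A_6^c \in \pP$, then (i) again. I do not anticipate any serious obstacle; the only thing to be careful about is not to invoke the hypothesis (\ref{group107}) or (\ref{group105}), since the algebra structure of $\pP$ is a purely formal consequence of the definition of $\eta$ and the fact that each $S(\ell)$ is a set (the two-element size plays no role in this lemma, though it will matter elsewhere).
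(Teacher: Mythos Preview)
Your proposal is correct and follows essentially the same approach as the paper: the paper also uses the characterization ``$A\in\pP$ iff $S(\ell)\cap A\neq\emptyset$ implies $S(\ell)\subseteq A$'' (implicitly rather than stated upfront), proves (ii) by exactly your argument, and derives (iii) from (i) and (ii) via De Morgan just as you do. The only cosmetic difference is in (i), where the paper argues directly (if $S(\ell)\cap A_2\neq\emptyset$ then $S(\ell)\cap A_1=\emptyset$, else $S(\ell)\subseteq A_1$ would contradict $S(\ell)$ meeting $A_2$) rather than invoking the symmetry of your reformulation; also note that your parenthetical mention of $|S(\ell)|=2$ in the reformulation is unnecessary, as you yourself observe at the end.
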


\begin{proof}  Assume that $S(\ell) \cap A_2 \not= \emptyset$.  Then $S(\ell) \cap A_1 \not= \emptyset$ is impossible.  Hence we
have $S(\ell) \subseteq A_2$, and this implies that $A_2$ belongs to $\pP$.

Let $S(\ell) \cap (A_3 \cup A_4) \not= \emptyset$.  Then either $S(\ell) \cap A_3 \not= \emptyset$ or $S(\ell) \cap A_4 \not= \emptyset$.
Hence either $S(\ell) \subseteq A_3$ or $S(\ell) \subseteq A_4$, and therefore $S(\ell) \subseteq A_3 \cup A_4$.  It follows that
$A_3 \cup A_4$ belongs to $\pP$.

By what we have already proved the sets
\begin{equation*}\label{group146}
A_7 = \{1, 2, \dots , N\} \setminus A_5,\quad\text{and}\quad A_8 = \{1, 2, \dots , N\} \setminus A_6
\end{equation*}
both belong to $\pP$, and therefore the set
\begin{equation*}\label{group148}
A_5 \cap A_6 = \{1, 2, \dots , N\} \setminus (A_7 \cup A_8)
\end{equation*}
belongs to $\pP$.
\end{proof}

\begin{lemma}\label{lemgroup2}  Let $A_1$ be a nonempty subset in $\A$, and let $B$ be a subset in $\pP$.  Assume that $A_1 \subseteq B$.  
Define an increasing sequence of subsets 
\begin{equation*}\label{group150}
A_1, A_2, A_3, \dots 
\end{equation*}
from $\A$ inductively by
\begin{equation*}\label{group152}
A_{n+1} = \eta\bigl(A_n\bigr),\quad\text{for each $n = 1, 2, 3, \dots $}.
\end{equation*}
Then 
\begin{equation*}\label{group153}
A_n \subseteq B\quad\text{for each $n = 1, 2, 3, \dots $}.
\end{equation*}
\end{lemma}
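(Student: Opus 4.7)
The plan is to proceed by induction on $n$, using the fact that the operator $\eta$ is both extensive (by (\ref{group140})) and monotone with respect to the ``closed'' sets in $\pP$.

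First, I would verify the two structural facts about $\eta$ that drive the argument. The extensivity $A_n \subseteq A_{n+1}$ is immediate from (\ref{group140}) applied to $A_n$, which already shows the sequence is increasing (as asserted in the statement). The second fact I need is a form of monotonicity relative to $B$: if $C \subseteq B$ for some $C \in \A$ and $B \in \pP$, then $\eta(C) \subseteq B$. To see this, suppose $S(\ell) \cap C \neq \emptyset$. Since $C \subseteq B$, we have $S(\ell) \cap B \neq \emptyset$ as well, and because $B$ is fixed by $\eta$ (i.e., $B \in \pP$), the remark immediately following (\ref{group142}) gives $S(\ell) \subseteq B$. Taking the union over all such $\ell$ yields
\begin{equation*}
\eta(C) = \bigcup_{\substack{\ell = 1 \\ S(\ell) \cap C \neq \emptyset}}^L S(\ell) \subseteq B.
\end{equation*}

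Next I would carry out the induction. The base case $n=1$ is the hypothesis $A_1 \subseteq B$. For the inductive step, assume $A_n \subseteq B$. Applying the monotonicity observation above with $C = A_n$, I obtain $A_{n+1} = \eta(A_n) \subseteq B$, completing the induction.

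I do not anticipate a real obstacle here: the statement is essentially the assertion that the $\eta$-closure of $A_1$ is contained in any $\eta$-closed set containing $A_1$, which is the standard closure-operator pattern. The only point requiring a bit of care is invoking the defining property of $\pP$ correctly, namely that $\eta(B) = B$ forces every $S(\ell)$ that meets $B$ to be contained in $B$ (and not merely that the union of such $S(\ell)$ equals $B$). This is exactly the observation noted in the sentence immediately after (\ref{group142}), so no additional work is needed.
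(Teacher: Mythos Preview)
Your proposal is correct and follows essentially the same approach as the paper: both argue by induction on $n$, with the inductive step consisting of the observation that if $A_n \subseteq B$ and $S(\ell) \cap A_n \neq \emptyset$, then $S(\ell)$ meets $B$ and hence $S(\ell) \subseteq B$ since $B \in \pP$. The only cosmetic difference is that you isolate this monotonicity fact as a separate preliminary observation before running the induction, whereas the paper folds it directly into the inductive step.
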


\begin{proof}  We argue by induction on $n$.  If $n = 1$ then $A_1 \subseteq B$ by hypothesis.  Now assume that $2 \le n$
and $A_{n-1} \subseteq B$.  Then we have
\begin{equation}\label{group165}
A_n = \eta(A_{n-1}) = \bigcup_{\substack{\ell = 1\\S(\ell) \cap A_{n-1} \not= \emptyset}}^L S(\ell).
\end{equation}
If $S(\ell) \cap A_{n-1} \not= \emptyset$ then $S(\ell)$ contains a point of $B$, and therefore 
$S(\ell) \subseteq B$.  It follows from (\ref{group165}) that $A_n \subseteq B$.  This proves the lemma.
\end{proof}

We say that a subset $A$ in $\A$ is {\it minimal} if $A$ is not empty and belongs to $\pP$, but no proper subset of $A$ belongs to $\pP$.  That is, 
a nonempty set $A$ in $\pP$ is {\it minimal} if for every nonempty subset $B \subseteq A$ such that $B \not= A$, we have $\eta(B) \not= B$.
We will show that each element of $\{1, 2, \dots , N\}$ is contained in a minimal subset in $\pP$.

\begin{lemma}\label{lemgroup3}  Let $A_1$ in $\A$ have cardinality $1$.  Define an increasing sequence of subsets 
\begin{equation*}\label{group158}
A_1, A_2, A_3, \dots 
\end{equation*}
from $\A$ inductively by
\begin{equation}\label{group160}
A_{n+1} = \eta\bigl(A_n\bigr),\quad\text{for $n = 1, 2, 3, \dots $}.
\end{equation}
Let $K$ be the smallest positive integer such that
\begin{equation}\label{group162}
A_K = \eta(A_K) = A_{K+1}.
\end{equation}
Then $K$ exists, $2 \le K$, and the subset $A_K$ is minimal.
\end{lemma}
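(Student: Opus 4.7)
The plan is to verify the three parts of the conclusion in turn. Since (\ref{group140}) gives $A_n \subseteq \eta(A_n) = A_{n+1}$, the sequence $(A_n)$ is weakly increasing in $\A$. Hence the cardinalities $|A_n|$ form a nondecreasing sequence of integers bounded above by $N$, so they must stabilize; the smallest index at which stabilization occurs gives $K$, and at that index $A_K = A_{K+1} = \eta(A_K)$. To see $K \ge 2$, write $A_1 = \{i\}$. By the covering property (\ref{group105}) there exists some index $\ell_0$ with $i \in S(\ell_0)$, and since $|S(\ell_0)| = 2$ by (\ref{group100}), $\eta(A_1) \supseteq S(\ell_0)$ contains a second element besides $i$. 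Hence $\eta(A_1) \supsetneq A_1$, which forces $K \neq 1$.

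For minimality, I would argue by contradiction. Suppose $B$ is a nonempty proper subset of $A_K$ with $B \in \pP$. Because $|A_1| = 1$, either $A_1 \subseteq B$ or $A_1 \cap B = \emptyset$. In the first case, Lemma \ref{lemgroup2} applied to $A_1 \subseteq B$ yields $A_n \subseteq B$ for every $n$; taking $n = K$ gives $A_K \subseteq B$, whence $B = A_K$, contradicting properness.

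In the remaining case one has $A_1 \subseteq A_K \setminus B$. Since $A_K$ and $B$ both lie in $\pP$, Lemma \ref{lemgroup1}(i) gives $\{1, 2, \dots , N\} \setminus B \in \pP$, and then Lemma \ref{lemgroup1}(iii) shows that $A_K \setminus B = A_K \cap \bigl(\{1,2,\dots,N\} \setminus B\bigr)$ also belongs to $\pP$. Applying Lemma \ref{lemgroup2} with $A_K \setminus B$ playing the role of the containing element of $\pP$, one obtains $A_n \subseteq A_K \setminus B$ for all $n$; in particular $A_K \subseteq A_K \setminus B$, forcing $B = \emptyset$, a contradiction. The step requiring the most care is this last case, where one exploits the algebra structure of $\pP$ established in Lemma \ref{lemgroup1} to manufacture a set in $\pP$ that contains $A_1$ yet lies entirely outside $B$; the rest of the argument is then a direct application of Lemma \ref{lemgroup2}.
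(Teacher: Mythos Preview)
Your proof is correct and follows essentially the same approach as the paper: both use the monotonicity from (\ref{group140}) and boundedness to get $K$, the covering property (\ref{group105}) to force $K\ge 2$, and then for minimality invoke Lemma~\ref{lemgroup1} to show $A_K\setminus B\in\pP$ and Lemma~\ref{lemgroup2} to derive a contradiction. The only cosmetic difference is that the paper writes $C=A_K\setminus B$ and uses a ``by renaming'' symmetry argument to collapse your two cases into one.
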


\begin{proof}  From (\ref{group140}) we get
\begin{equation*}\label{group164}
A_1 \subseteq A_2 \subseteq A_3 \subseteq \cdots \subseteq A_n \subseteq \cdots.
\end{equation*}
As $|A_n| \le N$ for each $n = 1, 2, \dots $, it is obvious that $K$ exists.

Let $A_1 = \{k_1\}$ where $1 \le k_1 \le N$.  It follows from (\ref{group105}) that there exists a subset $S(\ell_1)$ that contains $k_1$.
Write $S(\ell_1) = \{k_1, k_2\}$ where $k_1 \not= k_2$.  From (\ref{group135}) we conclude that 
\begin{equation*}\label{group166}
S(\ell_1) = \{k_1, k_2\} \subseteq \eta(A_1) = A_2,
\end{equation*}
and therefore $A_1 = \{k_1\}$ is a proper subset of $\eta(A_1) = A_2$.  Hence we have $2 \le K$.

If $A_K$ is not minimal there exists a proper subset $B \subseteq A_K$ such that $\eta(B) = B$, and therefore $B$ belongs to $\pP$.  Let
\begin{equation}\label{group168}
C = A_K \setminus B = A_K \cap \bigl(\{1, 2, \dots , N\} \setminus B\bigr)
\end{equation}
be the complement of $B$ in $A_K$.  It follows from Lemma \ref{lemgroup1}, and the representation on the right of (\ref{group168}),
that $C$ is a proper subset of $A_K$ and $C$ belongs to $\pP$.  Thus we have the disjoint union of proper subsets
\begin{equation}\label{group170}
A_K = B \cup C,\quad\text{where $B \in \pP$ and $C \in \pP$}.
\end{equation}
Plainly $A_1 = \{k_1\}$ is a subset of either $B$ or $C$, and by renaming these sets if necessary we may assume that $A_1 = \{k_1\}$
is contained in $B$.  Then it follows from Lemma \ref{lemgroup2} that
\begin{equation*}\label{group175}
A_n \subseteq B\quad\text{for each $n = 1, 2, 3, \dots $}.
\end{equation*}
But this is inconsistent with the representation of $A_K$ as the disjoint union (\ref{group170}).  We conclude that $B$ and $C$ do not exist, 
and therefore $A_K$ is minimal.
\end{proof}

It follows from Lemma \ref{lemgroup3} that each element of $\{1, 2, \dots , N\}$ is contained in a minimal subset.  This
minimal subset is unique, and leads to a partition of $\{1, 2, \dots , N\}$ into a disjoint union of minimal subsets.

\begin{lemma}\label{lemgroup4}  Let $B$ and $C$ be nonempty, minimal subsets in $\pP$.  Then either
\begin{equation*}\label{group190}
B = C,\quad\text{or}\quad B \cap C = \emptyset.
\end{equation*}
\end{lemma}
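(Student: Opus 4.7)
The plan is a short dichotomy argument driven entirely by the algebra structure established in Lemma \ref{lemgroup1}. The idea is that minimality prevents a nonempty element of $\pP$ from sitting strictly inside another minimal set, so any overlap forces equality.

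First I would suppose $B \cap C \neq \emptyset$ and aim to conclude $B = C$; the alternative $B \cap C = \emptyset$ then needs no further work. The key observation is that by Lemma \ref{lemgroup1}(iii), $B \cap C$ belongs to $\pP$, since both $B$ and $C$ do. By construction $B \cap C$ is a nonempty subset of $B$. Because $B$ is minimal in $\pP$ (i.e., no proper nonempty subset of $B$ lies in $\pP$), the only possibility is $B \cap C = B$, which gives $B \subseteq C$.

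Next I would run the same argument symmetrically: $B \cap C$ is also a nonempty subset of $C$ that lies in $\pP$, and the minimality of $C$ forces $B \cap C = C$, hence $C \subseteq B$. Combining the two inclusions yields $B = C$, completing the proof of the dichotomy \eqref{group190}.

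I do not anticipate a genuine obstacle here; everything follows directly from closure of $\pP$ under intersection (already proved in Lemma \ref{lemgroup1}) and the very definition of minimality. The only mild care needed is to make sure the subset $B \cap C$ considered in each minimality argument is nonempty, which is precisely the assumption under which we are trying to derive $B = C$.
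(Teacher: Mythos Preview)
Your proof is correct and in fact more direct than the paper's. The paper does not use the closure of $\pP$ under intersection; instead it picks a point $k_1 \in B \cap C$, builds the iterated sequence $A_1 = \{k_1\}$, $A_{n+1} = \eta(A_n)$ from Lemma~\ref{lemgroup3}, and obtains a minimal set $A_K$ which, by Lemma~\ref{lemgroup2}, is contained in both $B$ and $C$. Minimality of $B$ and $C$ then forces $B = A_K = C$. Your route replaces this auxiliary construction by the single observation that $B \cap C$ itself lies in $\pP$ (Lemma~\ref{lemgroup1}(iii)) and is nonempty, so minimality immediately gives $B \cap C = B$ and $B \cap C = C$. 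This is shorter and uses only the algebra structure of $\pP$, whereas the paper's argument leans on the dynamical description of minimal sets via $\eta$; both are valid, but yours avoids invoking Lemmas~\ref{lemgroup2} and~\ref{lemgroup3} altogether.
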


\begin{proof}  If $B \cap C = \emptyset$ we are done.  Therefore we assume that $k_1$ is a point in $B \cap C$.    Let 
$A_1 = \{k_1\}$, and let $A_1, A_2, A_3, \dots $ be the sequence of subsets defined by (\ref{group160}).  Let $K$ be the smallest
positive integer such that (\ref{group162}) holds.  By Lemma \ref{lemgroup3} the subset $A_K$ is minimal, and by Lemma \ref{lemgroup2}
we have both $A_K \subseteq B$ and $A_K \subseteq C$.  But $A_K$ is minimal and therefore $A_K$ cannot be a proper subset of 
the minimal subset $B$.  Similarly, $A_K$ cannot be a proper subset of the minimal subset $C$.  We conclude that
\begin{equation*}\label{group200}
B = A_K = C.
\end{equation*}
This proves the lemma.
\end{proof}

\begin{lemma}\label{lemgroup5}  Let {\rm (\ref{group95})} be a collection of distinct subsets of $\{1, 2, \dots , N\}$ such that
\begin{equation*}\label{group230}
|S(\ell)| = 2\quad\text{for each $\ell = 1, 2, \dots , L$},
\end{equation*} 
and
\begin{equation*}\label{group232}
\bigcup_{\ell = 1}^L S(\ell)= \{1, 2, \dots , N\}.
\end{equation*}
Let $\pP \subseteq \A$ be the collection of subsets of $\{1, 2, \dots , N\}$ defined by {\rm (\ref{group142})}, and let $A_1, A_2, \dots , A_r$ 
be the collection of all distinct, minimal subsets in $\pP$.  Then the subsets $A_1, A_2, \dots , A_r$ are disjoint, and
\begin{equation*}\label{group235}
A_1 \cup A_2 \cup \cdots \cup A_r = \{1, 2, \dots , N\}.
\end{equation*}
\end{lemma}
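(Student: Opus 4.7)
The proof is essentially a packaging of the two preceding lemmas, so my plan is to be quite brief.

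First I would establish disjointness. Since $A_1, A_2, \dots, A_r$ are, by definition, distinct nonempty minimal subsets of $\pP$, Lemma \ref{lemgroup4} immediately gives $A_i \cap A_j = \emptyset$ whenever $i \neq j$, because the only alternative allowed by that lemma is $A_i = A_j$, which is ruled out.

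Next I would establish the covering identity \eqref{group235}. The inclusion $A_1 \cup \cdots \cup A_r \subseteq \{1, 2, \dots, N\}$ is trivial, so only the reverse inclusion requires work. Fix $k \in \{1, 2, \dots, N\}$ and apply Lemma \ref{lemgroup3} to the singleton $\{k\}$: the iterates $A_1 = \{k\}, A_2 = \eta(A_1), A_3 = \eta(A_2), \dots$ eventually stabilize at some $A_K$ which is a nonempty minimal subset in $\pP$, and $k \in A_1 \subseteq A_K$. Since the collection $A_1, \dots, A_r$ (as named in the statement of the lemma) contains \emph{all} nonempty minimal subsets of $\pP$, this stabilized set $A_K$ equals some $A_j$, so $k \in A_j$. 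This exhibits $k$ in the union.

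There is no real obstacle here; the substance has already been done in Lemmas \ref{lemgroup3} and \ref{lemgroup4}. The only thing to be mildly careful about is making the logical chain explicit: existence of a minimal subset containing each point comes from Lemma \ref{lemgroup3}, the uniqueness (hence well-definedness of the partition) comes from Lemma \ref{lemgroup4}, and finiteness of the list $A_1, \dots, A_r$ is automatic because $\A$ itself is finite. I would end the proof immediately after noting both inclusions.
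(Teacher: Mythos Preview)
Your proposal is correct and follows essentially the same approach as the paper: disjointness from Lemma~\ref{lemgroup4} applied to distinct minimal sets, and coverage from Lemma~\ref{lemgroup3} applied to each singleton. If anything, your attribution is slightly cleaner, since the paper's proof cites Lemma~\ref{lemgroup4} for the fact that each point lies in a minimal subset, whereas that is really the content of Lemma~\ref{lemgroup3}, exactly as you invoke it.
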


\begin{proof}  The subsets $A_1, A_2, \dots , A_r$ exist by Lemma \ref{lemgroup3}.  Then it follows from Lemma \ref{lemgroup4} that the
subsets $A_1, A_2, \dots , A_r$ are disjoint.  Therefore we get 
\begin{equation}\label{group240}
A_1 \cup A_2 \cup \cdots \cup A_r \subseteq \{1, 2, \dots , N\}.
\end{equation}
It follows from Lemma \ref{lemgroup3} that each point in $\{1, 2, \dots , N\}$ is contained in a minimal subset, hence there is
equality in (\ref{group240}).
\end{proof}

We continue to assume that $L$ and $N$ are positive integers that satisfy (\ref{group107}).  Let $\bxi_1, \bxi_2, \dots , \bxi_L$ be vectors 
from the set of extreme points $F_N$, and write
\begin{equation*}\label{set1}
\Xi = \bigl(\bxi_1\ \bxi_2\ \cdots \ \bxi_L\bigr)
\end{equation*}
for the $N\times L$ matrix having $\bxi_1, \bxi_2, \dots , \bxi_L$ as columns.  We assume that no row of the matrix $\Xi$ is identically
zero, and we assume that $\rank \Xi = L$.  We write 
$\bxi_{\ell} = \bigl(\xi_{n \ell}\bigr)$ and use the vectors $\bxi_{\ell}$ to define a collection of subsets
\begin{equation}\label{set8}
S(\ell) \subseteq \{1, 2, \dots , N\},\quad\text{for each $\ell = 1, 2, \dots , L$}.
\end{equation}
More precisely, we define
\begin{equation}\label{set15}
S(\ell) = \{n : \text{$1 \le n \le N$ and $\xi_{n \ell} \not= 0$}\},\quad\text{for each $\ell = 1, 2, \dots , L$}.
\end{equation}
As each column vector $\bxi_{\ell}$ belongs to the set of extreme points $F_N$, it follows that each subset $S(\ell)$ has cardinality $2$, and
\begin{equation*}\label{set18}
\sum_{n = 1}^N \xi_{n \ell} = \sum_{n \in S(\ell)} \xi_{n \ell} = 0.
\end{equation*}
Because no row of the matrix $\Xi$ is identically zero, we find that
\begin{equation*}\label{set22}
\bigcup_{\ell = 1}^L S(\ell) = \{1, 2, \dots , N\}.
\end{equation*}
Therefore the subsets $S(\ell)$ defined by (\ref{set15}) satisfy the conditions (\ref{group100}) and (\ref{group105}) that were assumed in
the previous lemmas.  We continue to write $\A$ for the collection of all subsets of $\{1, 2, \dots , N\}$, and we write $\pP$ for the collection 
of subsets defined by (\ref{group142}).

Next we suppose that $A_1, A_2, \dots , A_r$ is the collection of distinct, nonempty, minimal subsets in $\pP$.  Then it follows from 
Lemma \ref{lemgroup5} that
\begin{equation}\label{set29}
A_1 \cup A_2 \cup \cdots \cup A_r = \{1, 2, \dots , N\}
\end{equation}
is a disjoint union of nonempty sets.  Because each subset $A_j$ is minimal we have
\begin{equation}\label{set36}
A_j = \bigcup_{\substack{\ell = 1\\S(\ell) \subseteq A_j}}^L S(\ell) = \bigcup_{\substack{\ell = 1\\S(\ell) \cap A_j \not= \emptyset}}^L S(\ell).
\end{equation} 
We use each subset $A_j$ to define a subset $D_j \subseteq \{1, 2, \dots , L\}$ by
\begin{equation}\label{set43}
D_j = \{\ell : \text{$1 \le \ell \le L$ and $S(\ell) \subseteq A_j$}\},\quad\text{for $j = 1, 2, \dots , r$}.
\end{equation}
Then it follows from (\ref{set29}), (\ref{set36}), and (\ref{set43}), that
\begin{equation}\label{set50}
D_1 \cup D_2 \cup \cdots \cup D_r = \{1, 2, \dots , L\}
\end{equation}
is a disjoint union of nonempty sets.  For each $j = 1, 2, \dots , r$ we write $Y_j$ for the $N \times |D_j|$ submatrix of $\Xi$ 
having columns indexed by the integers in $D_j$.  That is, we define
\begin{equation}\label{set71}
Y_j = \bigl(\bxi_{\ell}\bigr),\quad\text{where $\ell \in D_j$ indexes columns}.
\end{equation}
We assemble the matrices $Y_1, Y_2, \dots , Y_r$ as $N \times |D_j|$ blocks so as to define the $N \times L$ matrix
\begin{equation}\label{set78}
Z = \bigl(Y_1\ Y_2\ \cdots Y_r\bigr).
\end{equation}
Because of the disjoint union (\ref{set50}), the columns of the matrix $Z$ can also be obtained by permuting the columns of the 
matrix $\Xi$.  That is, there exists an $L \times L$ permutation matrix $P$ such that 
\begin{equation*}\label{set85}
\Xi = Z P.  
\end{equation*}
As $\det P = \pm 1$ and the columns of $\Xi$ are linearly independent, it follows that the matrix $Y_j$ has rank $|D_j|$
for each $j = 1, 2, \dots , r$.  We also find that
\begin{equation*}\label{set88}
\det \bigl(\Xi^T \Xi\bigr) = \det\bigl(P^T Z^T Z P\bigr) = \det\bigl(Z^T Z\bigr)
\end{equation*}
is a positive integer.

Now suppose that $1 \le i \le r$, that $1 \le j \le r$, and $i \not= j$.  It follows from (\ref{set8}), (\ref{set43}), and (\ref{set50}), 
that each nonzero row of the matrix $Y_i$ is indexed by an integer in the set $A_i$, and each nonzero row of the matrix $Y_j$ is indexed 
by an integer in the set $A_j$.  As $A_i$ and $A_j$ are disjoint we conclude that $Y_i^T Y_j$ is a zero matrix.  Because we have 
organized $Z$ into blocks as in (\ref{set78}), we find that
\begin{equation}\label{set92}
\det \bigl(\Xi^T \Xi\bigr) = \det \bigl(Z^T Z\bigr) = \prod_{j = 1}^r \det\bigl(Y_j^T Y_j\bigr).
\end{equation}

Since the extreme points $\bxi_l$ that form the columns of $\Xi$ belong to $F_N$, it follows that 
\begin{equation*}\label{set99}
\sum_{n = 1}^N \xi_{n \ell} = 0,\quad \text{for each $\ell = 1, 2, \dots L$}.
\end{equation*}
For each $j = 1, 2, \dots , r$ the nonzero rows of $Y_j$ are indexed by the elements of $A_j$, and so we get
\begin{equation}\label{set106}
\sum_{n \in A_j} \xi_{n \ell} = 0,\quad \text{for each $\ell \in D_j$}.
\end{equation}
As $Y_j$ has rank $|D_j|$ we find that
\begin{equation}\label{set113}
|D_j| + 1 \le |A_j|.
\end{equation}

Next we will show that there is equality in the inequality (\ref{set113}).  Each subset $A_j$ is minimal in $\pP$ and therefore no proper 
subset of $A_j$ belongs to $\pP$.  And it follows from (\ref{set106}) that the $|A_j|$ distinct (row) vectors
\begin{equation}\label{set127}
\big\{\bigl(\xi_{n \ell}\bigr) : \text{$n \in A_j$}\big\}
\end{equation}  
are linearly dependent.  Let $f : A_j \rightarrow \Z$ be a function that is supported on the subset
\begin{equation*}\label{set133}
B = \big\{n \in A_j : \text{$f(n) \not= 0$}\big\},
\end{equation*}
where $B$ is a proper subset of $A_j$.  As $B$ does not belong to $\pP$ it follows that there exists $\ell_1$ in $D_j$ such that
\begin{equation*}\label{set135}
\bigl|S(\ell_1) \cap B\bigr| = 1.
\end{equation*}
We conclude that
\begin{equation*}\label{set138}
\sum_{n \in A_j} f(n) \xi_{n \ell_1} = \sum_{n \in B} f(n) \xi_{n \ell_1} \not= 0,
\end{equation*}
because this sum contains exactly one nonzero term.  This shows that no proper subset of the collection of (row) vectors (\ref{set127})
is linearly dependent.  In particular, each subset of the (row) vectors in (\ref{set127}) with cardinality $|A_j| - 1$ is linearly independent.
As the rank of the matrix $Y_j$ is $|D_j|$ we conclude from (\ref{set113}) that 
\begin{equation}\label{set141}
|D_j| + 1 = |A_j|\quad\text{for each $j = 1, 2, \dots , r$}.
\end{equation}
We also get the identity
\begin{equation}\label{set147}
L + r = \sum_{j = 1}^r \bigl(|D_j| + 1\bigr) = \sum_{j = 1}^r |A_j| = N
\end{equation}
which determines the value of $r$.

\begin{lemma}\label{lemgroup6}  Let the columns of the $N \times L$ matrix
\begin{equation*}\label{set151}
\Xi = \bigl(\bxi_1\ \bxi_2\ \cdots \ \bxi_L\bigr)
\end{equation*}
be vectors from the set of extreme points $F_N$ defined in {\rm (\ref{reg40})}.  If $L < N \le 2L$ then
\begin{equation}\label{set158}
\big\|\bxi_1 \wedge \bxi_2 \wedge \cdots \wedge \bxi_L\big\|_1 \le \biggl(\frac{N}{N - L}\biggr)^{N - L}.
\end{equation}
\end{lemma}

\begin{proof}  Clearly we may assume that $\rank \Xi = L$.  And we assume to begin with that no row of the matrix $\Xi$ is identically zero.  
As in our proof of Lemma \ref{lemreg4} we have
\begin{equation}\label{set161}
\big\|\bxi_1 \wedge \bxi_2 \wedge \cdots \wedge \bxi_L\big\|_1 
		= \sum_{\substack{I \subseteq \{1, 2, \dots , N\}\\ |I| = L}} \bigl(\det \Xi_I\bigr)^2 = \det\bigl(\Xi^T \Xi\bigr)
\end{equation}
by the Cauchy-Binet identity.  By combining (\ref{set92}) and (\ref{set161}) we find that
\begin{equation*}\label{set165}
\big\|\bxi_1 \wedge \bxi_2 \wedge \cdots \wedge \bxi_L\big\|_1 = \prod_{j = 1}^r \det\bigl(Y_j^T Y_j\bigr),
\end{equation*}
where each $N \times |D_j|$ matrix $Y_j$ is defined as in (\ref{set71}).  Let $W_j$ be the $|A_j| \times |D_j|$ submatrix of $Y_j$ obtained
by removing all rows which are identically zero.  Because there is equality in the inequality (\ref{set113}) the submatrix $W_j$ is also 
$(|D_j| + 1) \times |D_j|$.  That is, $W_j$ is an $(M + 1) \times M$ matrix with columns in the set of extreme points $F_M$, where $M = |D_j|$.  
Then it follows from the inequality (\ref{turn31}) and (\ref{set141}) that
\begin{equation}\label{set162}
\begin{split}
\prod_{j = 1}^r \det\bigl(Y_j^T Y_j\bigr) &= \prod_{j = 1}^r \det\bigl(W_j^T W_j\bigr)\\
							   &\le \prod_{j = 1}^r \bigr(|D_j| + 1\bigr)\\
							   &= \prod_{j = 1}^r |A_j|.
\end{split}
\end{equation}
We estimate the product on the right of (\ref{set162}) by applying the arithmetic/geometric mean inequality and using the identity
(\ref{set147}).  In this way we arrive at the inequality							   
\begin{equation*}\label{set169}
\begin{split}
\prod_{j = 1}^r \det\bigl(Y_j^T Y_j\bigr) &\le \biggl(r^{-1} \sum_{j = 1}^r |A_j|\biggr)^r\\
							   &= \bigl(r^{-1} N\bigr)^r\\
							   &= \biggl(\frac{N}{N - L}\biggr)^{N - L}.
\end{split}	
\end{equation*}  
This proves (\ref{set158}) under the assumption that no row of $\Xi$ is identically zero.

Next we suppose that $L  < N \le 2L$, that
\begin{equation*}\label{ring234}
\Xi = \bigl(\bxi_1\ \bxi_2\ \cdots \ \bxi_L\bigr)
\end{equation*}
is an $N \times L$ matrix with columns $\bxi_1, \bxi_2, \dots , \bxi_L$ from $F_N$, that $\rank \Xi = L$, and that $\Xi$ has exactly 
$N - M > 0$ rows that are identically zero.  Because $\rank \Xi = L$, we find that $L \le M < N \le 2L$.  We write
\begin{equation*}\label{ring239}
\Xi ^{\prime}= \bigl(\bxi_1^{\prime}\ \bxi_2^{\prime}\ \cdots \ \bxi_L^{\prime}\bigr)
\end{equation*}
for the $M \times L$ matrix obtained from $\Xi$ by removing the rows of $\Xi$ that are identically zero.  It follows
from Lemma \ref{lemreg2} that each column $\bxi_1^{\prime}, \bxi_2^{\prime}, \dots , \bxi_L^{\prime}$ belongs to $F_M$.
Clearly each $L \times L$ submatrix of $\Xi$ with a row that is identically zero has a zero determinant.  Thus we have
\begin{equation*}\label{ring242}
\|\bxi_1\wedge \bxi_2 \wedge \cdots \wedge \bxi_L\|_1 = \|\bxi_1^{\prime} \wedge \bxi_2^{\prime} \wedge \cdots \wedge \bxi_L^{\prime}\|_1.
\end{equation*}
If $L = M$ then $\Xi ^{\prime}$ is $L \times L$, and it follows from Lemma \ref{lemreg3} that
\begin{equation*}\label{ring244}
\|\bxi_1^{\prime} \wedge \bxi_2^{\prime} \wedge \cdots \wedge \bxi_L^{\prime}\|_1 = 1 \le \biggl(\frac{N}{N - L}\biggr)^{N-L}.
\end{equation*}
If $L < M < N \le 2L$ then by the case already considered we get
\begin{equation*}\label{ring249}
\begin{split}
\|\bxi_1^{\prime} \wedge \bxi_2^{\prime} \wedge \cdots \wedge \bxi_L^{\prime}\|_1
	&\le \biggl(\frac{M}{M - L}\biggr)^{M-L}\\
	&< \biggl(\frac{N}{N - L}\biggr)^{N-L}.
\end{split}
\end{equation*}
This verifies the bound (\ref{set158}) in general.
\end{proof}

We now combine Lemma \ref{lemreg5} and Lemma \ref{lemgroup6} to obtain the inequality (\ref{set158}) in full generality.

\begin{theorem}\label{thmset1}  Let the columns of the $N \times L$ matrix
\begin{equation*}\label{ring253}
\Xi = \bigl(\bxi_1\ \bxi_2\ \cdots \ \bxi_L\bigr)
\end{equation*}
be vectors in the set of extreme points $E_N \cup F_N$.  If $L < N \le 2L$ then
\begin{equation}\label{ring255}
\|\bxi_1\wedge \bxi_2 \wedge \cdots \wedge \bxi_L\|_1 \le \biggl(\frac{N}{N-L}\biggr)^{N - L}.
\end{equation}
\end{theorem}

\begin{proof}  We argue by induction on the positive integer $L$.  If $L = 1$ then $N = 2$ and the result is trivial to check.  
Next we assume that $2 \le L$, and we assume that (\ref{ring255}) holds for all pairs $(L^{\prime}, N^{\prime})$ such that 
$L^{\prime} < N^{\prime} \le 2L^{\prime}$ and $1 \le L^{\prime} < L$. 

If the extreme points $\bxi_1, \bxi_2, \dots , \bxi_L$ all belong 
to the set of extreme points $E_N$, then
\begin{equation*}\label{ring303}
\|\bxi_1 \wedge \bxi_2 \wedge \cdots \wedge \bxi_L\|_1 = 1
\end{equation*}
and the inequality (\ref{ring255}) is trivial.  If the extreme points $\bxi_1, \bxi_2, \dots , \bxi_L$ all belong to the set of extreme points $F_N$, 
then the inequality (\ref{ring255}) follows from Lemma \ref{lemgroup6}.  To complete the proof we assume that $K$ of the extreme 
points $\bxi_1, \bxi_2, \dots , \bxi_L$ belong to $E_N$, and $L - K$ extreme points $\bxi_1, \bxi_2, \dots , \bxi_L$ belong to $F_N$, 
where $1 \le K < L$.  In this case the set of extreme points satisfies the hypotheses of Lemma \ref{lemreg5}.  It follows from the conclusion of 
Lemma \ref{lemreg5} that there exist linearly independent extreme points $\bbeta_1, \bbeta_2, \dots , \bbeta_{L-K}$ in the set 
$E_{N-K} \cup F_{N-K}$ such that
\begin{equation}\label{ring308}
\|\bxi_1 \wedge \bxi_2 \wedge \cdots \wedge \bxi_L \|_1 = \|\bbeta_1 \wedge \bbeta_2 \wedge \cdots \wedge \bbeta_{L-K}\|_1.
\end{equation}

We write $L^{\prime} = L - K$, $N^{\prime} = N - K$, and we consider two cases.  First we suppose that 
\begin{equation*}
N^{\prime} \le 2L^{\prime}.
\end{equation*}
In this case we apply the inductive hypothesis and conclude that
\begin{equation}\label{ring313}
\begin{split}
 \|\bbeta_1 \wedge \bbeta_2 \wedge \cdots \wedge \bbeta_{L-K}\|_1 
 			       &\le \biggl(\frac{N^{\prime}}{N^{\prime} - L^{\prime}}\biggr)^{N^{\prime} - L^{\prime}}\\
                                &= \biggl(\frac{N - K}{N - L}\biggr)^{N - L}\\
                                &< \biggl(\frac{N}{N - L}\biggr)^{N - L}.
\end{split}
\end{equation}
Next we suppose that
\begin{equation*}\label{ring315}
2L^{\prime} \le N^{\prime}.
\end{equation*}
In this case we appeal to the inequality (\ref{reg271}) which we have already proved.  By that result we have
\begin{equation}\label{ring318}
\begin{split}
 \|\bbeta_1 \wedge \bbeta_2 \wedge \cdots \wedge \bbeta_{L-K}\|_1 &\le 2^{L^{\prime}}\\
 				&= \min\bigg\{2^{L^{\prime}}, \biggl(\frac{N^{\prime}}{N^{\prime}-L^{\prime}}\biggr)^{N^{\prime} - L^{\prime}}\bigg\}\\
                                  &\le \biggl(\frac{N^{\prime}}{N^{\prime}-L^{\prime}}\biggr)^{N^{\prime} - L^{\prime}}\\
                                  &= \biggl(\frac{N - K}{N - L}\biggr)^{N - L}\\
                                  &< \biggl(\frac{N}{N - L}\biggr)^{N - L}.
\end{split}
\end{equation}
Combining (\ref{ring308}), (\ref{ring313}), and (\ref{ring318}), establishes the inequality
\begin{equation*}\label{ring323}
\|\bxi_1 \wedge \bxi_2 \wedge \cdots \wedge \bxi_L \|_1 \le \biggl(\frac{N}{N - L}\biggr)^{N - L}
\end{equation*}
whenever $L < N \le 2L$.  This proves the lemma.
\end{proof}

If $\bx_1, \bx_2, \dots , \bx_L$ belong to $\R^N$ and $L < N \le 2L $, then it follows from (\ref{ring255}) that
\begin{equation*}\label{ring329}
\|\bx_1 \wedge \bx_2 \wedge \cdots \wedge \bx_L\|_1 \le \biggl(\frac{N}{N - L}\biggr)^{N - L}\ \delta(\bx_1) \delta(\bx_2) \cdots \delta(\bx_L).
\end{equation*}
This proves the inequality (\ref{norm38}), and so completes the proof of Theorem \ref{thmnorm1}.  

\section{Proof of Theorem \ref{thmfirst1}}

We apply Theorem \ref{thmnorm1} with $N = r + 1$ and $L = q$, and we apply the theorem to the collection of linearly independent
points $\balpha_1, \balpha_2, \dots , \balpha_q$ in 
\begin{equation*}\label{wedge1}
\Gamma_S(k) \subseteq \D_r \subseteq \R^{r+1}.
\end{equation*}
From (\ref{norm44}) we find that
\begin{equation}\label{wedge6}
\begin{split}
\|\balpha_1 \wedge \balpha_2 &\wedge \cdots \wedge \balpha_q\|_1\\
	&\le \min\bigg\{2^q, \biggl(\frac{r+1}{r+1-q}\biggr)^{r+1-q}\bigg\}\ \delta(\balpha_1) \delta(\balpha_2) \cdots \delta(\balpha_q)\\
	&= C(r, q)\  \delta(\balpha_1)\delta(\balpha_2) \cdots \delta(\balpha_q).
\end{split}
\end{equation}
By the product formula the points $\balpha_1, \balpha_2, \dots , \balpha_q$ belong to the diagonal subspace $\D_r$.  Therefore we get
\begin{equation}\label{wedge11}
\delta(\balpha_j) = \hh \|\balpha_j\|_1,\quad\text{for each $j = 1, 2, \dots , q$}.
\end{equation}
Combining (\ref{wedge6}) and (\ref{wedge11}) establishes the inequality (\ref{first89}).

\section{Proof of Theorem \ref{thmfirst2}}
 
Let $1 \le L < N$ and let
\begin{equation*}\label{McM5}
X = \bigl(\bx_1\ \bx_2\ \cdots \ \bx_L\bigr)
\end{equation*}
be an $N \times L$ real matrix with columns $\bx_1, \bx_2, \dots , \bx_L$.  We assume that the columns of $X$ are
$\R$-linearly independent so that $\rank X = L$, and
\begin{equation*}\label{McM10}
\bx_1 \wedge \bx_2 \wedge \cdots \wedge \bx_L \not= \bo.
\end{equation*}
We use the matrix $X$ to define a norm on $\R^L$ by
\begin{equation}\label{McM16}
\bwy \mapsto \|X \bwy\|_1.
\end{equation}
The unit ball associated to the norm (\ref{McM16}) is obviously the set
\begin{equation*}\label{McM21}
B_X = \big\{\bwy \in \R^L : \|X \bwy\|_1 \le 1\big\}.
\end{equation*}
And it is not difficult to show that the dual unit ball is
\begin{equation*}\label{McM26}
B_X^* = \big\{X^T \bw : \text{$\bw \in \R^N$ and $\|\bw\|_{\infty} \le 1$}\big\}.
\end{equation*}
It can be shown (see \cite{bolker1969}, \cite{schneider1983}, or for a more general result \cite[Lemma 2]{vaaler2014}) 
that the dual unit ball $B_X^*$ is an example of a 
zonoid.  Therefore by an inequality of S. Reisner \cite[Theorem 2]{reisner1985}, we have
\begin{equation}\label{McM31}
\frac{4^L}{L!} \le \Vol_L\bigl(B_X\bigr) \Vol_L\bigl(B_X^*\bigr).
\end{equation}
An identity for the $L$-dimensional volume of $B_X^*$ was established by P.~McMullen and C.~G.~Shephard as discussed in
\cite{McMullen1984} and \cite[equation (57)]{shephard1974}.
Their result asserts that
\begin{equation}\label{McM37}
\Vol_L\bigl(B_X^*\bigr) =  2^L \sum_{|I| = L} \bigl|\det X_I\bigr| = 2^L \|\bx_1 \wedge \bx_2 \wedge \dots \wedge \bx_L\|_1.
\end{equation}
By combining Reisner's inequality (\ref{McM31}) and the volume formula (\ref{McM37}), we obtain the lower bound
\begin{equation}\label{McM39}
\frac{2^L}{L!} \le \Vol_L\bigl(B_X\bigr) \|\bx_1 \wedge \bx_2 \wedge \cdots \wedge \bx_L\|_1.
\end{equation}

Now let
\begin{equation*}\label{McM41}
0 < \lambda_1 \le \lambda_2 \le \cdots \le \lambda_L < \infty
\end{equation*}
be the successive minima for the convex symmetric set $B_X$ and the integer lattice $\Z^L$.  By Minkowski's theorem 
on successive minima (see \cite[Section VIII.4.3]{cassels1971}) we have
\begin{equation}\label{McM44}
\Vol_L\bigl(B_X\bigr) \lambda_1 \lambda_2 \cdots \lambda_L \le 2^L.
\end{equation}
We combine the lower bound (\ref{McM39}) and the upper bound (\ref{McM44}), and obtain the inequality
\begin{equation}\label{McM47}
\lambda_1 \lambda_2 \cdots \lambda_L \le \bigl(L!\bigr) \|\bx_1 \wedge \bx_2 \wedge \cdots \wedge \bx_L\|_1.
\end{equation}
This leads to the following general result.

\begin{theorem}\label{thmMcM1}  Let $\X \subseteq \R^N$ be the free group of rank $L$ generated by the linearly independent
vectors $\bx_1, \bx_2, \dots , \bx_L$.  Then there exist linearly independent points $\bwy_1, \bwy_2, \dots , \bwy_L$ in $\X$ such that
\begin {equation}\label{McM54}
\|\bwy_1\|_1 \|\bwy_2\|_1 \cdots \|\bwy_L\|_1 \le (L!) \|\bx_1 \wedge \bx_2 \wedge \cdots \wedge \bx_L\|_1.
\end{equation}
Moreover, if $\Y \subseteq \X$ is the subgroup generated by the points $\bwy_1, \bwy_2, \dots , \bwy_L$, then 
$[\X : \Y] \le L!$.
\end{theorem}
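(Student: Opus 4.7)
The plan is to deduce Theorem \ref{thmMcM1} directly from the successive minima inequality (\ref{McM47}), which has already been established, together with a standard inequality relating the $\ell^1$-norm of an exterior product to the product of the $\ell^1$-norms of its factors.

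First I would choose $\bu_1, \bu_2, \ldots, \bu_L \in \Z^L$ to be linearly independent lattice vectors realizing the successive minima of the convex symmetric body $B_X$ with respect to $\Z^L$; this is a standard existence result (see, e.g., Cassels, Section VIII.1). By construction, $\|X\bu_i\|_1 = \lambda_i$. Then I would set $\bwy_i = X\bu_i$ for $i = 1, \ldots, L$. These vectors lie in $\X = X\Z^L$, are $\R$-linearly independent because $\bu_1, \ldots, \bu_L$ are and $X$ has full column rank, and satisfy $\prod_{i=1}^L \|\bwy_i\|_1 = \prod_{i=1}^L \lambda_i$. Feeding (\ref{McM47}) into this equation immediately gives the main inequality (\ref{McM54}).

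For the index bound, let $\Y$ be the subgroup of $\X$ generated by $\bwy_1, \ldots, \bwy_L$, and let $B \in \Z^{L \times L}$ denote the matrix with columns $\bu_1, \ldots, \bu_L$. Since $X$ is injective on $\Z^L$, the map $\bu \mapsto X\bu$ identifies $[\X : \Y]$ with $[\Z^L : B\Z^L] = |\det B|$. The multilinearity of the exterior product yields $\bwy_1 \wedge \cdots \wedge \bwy_L = (\det B)(\bx_1 \wedge \cdots \wedge \bx_L)$, and therefore $[\X : \Y] \cdot \|\bx_1 \wedge \cdots \wedge \bx_L\|_1 = \|\bwy_1 \wedge \cdots \wedge \bwy_L\|_1$. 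The final piece is the elementary bound $\|\bwy_1 \wedge \cdots \wedge \bwy_L\|_1 \le \prod_{i=1}^L \|\bwy_i\|_1$, which follows by expanding each $L \times L$ subdeterminant via Leibniz, moving absolute values inside the permutation sum, and observing that the resulting sum over injective index maps $[L] \to [N]$ is dominated by the full product $\prod_i \|\bwy_i\|_1$. Combining this with (\ref{McM54}) gives $[\X : \Y] \le L!$.

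I do not anticipate a serious obstacle: the real work was done in proving (\ref{McM47}) via Reisner's inequality and the McMullen-Shephard volume identity, and the theorem is essentially a repackaging of that inequality in terms of lattice vectors. The mildly delicate point is the bookkeeping involved in choosing linearly independent vectors that simultaneously achieve each successive minimum $\lambda_i$, but this is standard for the integer lattice because of discreteness.
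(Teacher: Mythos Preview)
Your proof is correct and, for the main inequality (\ref{McM54}), identical to the paper's: both take linearly independent $\bu_1,\dots,\bu_L\in\Z^L$ realizing the successive minima of $B_X$, set $\bwy_\ell = X\bu_\ell$, and read off (\ref{McM54}) directly from (\ref{McM47}).

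The only difference is in the index bound. The paper simply invokes the standard corollary of Minkowski's second theorem (combine the lower bound in (\ref{McM44}) applied to the sublattice $B\Z^L$, whose covolume is $|\det B|$, with the upper bound applied to $\Z^L$). You instead recycle (\ref{McM54}) together with the elementary $\ell^1$--Hadamard bound $\|\bwy_1\wedge\cdots\wedge\bwy_L\|_1\le\prod_\ell\|\bwy_\ell\|_1$ and the identity $\|\bwy_1\wedge\cdots\wedge\bwy_L\|_1=|\det B|\,\|\bx_1\wedge\cdots\wedge\bx_L\|_1$. Both arguments are short and valid; yours avoids explicitly appealing to the lower half of Minkowski's theorem, at the cost of the extra Hadamard-type estimate.
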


\begin{proof}  By Minkowski's theorem on successive minima there exist linearly independent points $\bm_1, \bm_2, \dots , \bm_L$ in 
the integer lattice $\Z^L$ such that
\begin{equation}\label{McM49}
\big\|X \bm_{\ell}\|_1 = \lambda_{\ell}\quad\text{for $\ell = 1, 2, \dots , L$}.
\end{equation}
As $\rank X = L$ the points
\begin{equation*}\label{McM53}
\big\{X \bm_{\ell} : \ell = 1, 2, \dots , L\big\}
\end{equation*}
are linearly independent points in the free abelian group $\X$.  We write $\bwy_{\ell} = X\bm_{\ell}$ for each $\ell = 1, 2, \dots , L$.
Then (\ref{McM54}) follows from (\ref{McM47}) and (\ref{McM49}).  The bound $[\X : \Y] \le L!$ also follows from 
Minkowski's theorem.
\end{proof}

Now let $L = q$, $N = r + 1$ and let $\fA \subseteq \R^{r+1}$ be the subgroup of rank $q$ generated by the linearly independent vectors
$\balpha_1, \balpha_2, \dots , \balpha_q$. 
By Theorem \ref{thmMcM1} there exist linearly independent vectors $\Bbeta_1, \Bbeta_2, \dots , \Bbeta_q$ in $\fA$ such that
\begin{equation*}\label{app60}
\|\Bbeta_1\|_1 \|\Bbeta_2\|_1 \cdots \|\Bbeta_q\|_1 \le (q!) \|\balpha_1 \wedge \balpha_2 \wedge \cdots \wedge \balpha_q\|_1.
\end{equation*}                       
Moreover, the free group $\fB \subseteq \fA$ generated by the vectors $\Bbeta_1, \Bbeta_2, \dots , \Bbeta_q$, has rank $q$ and index
\begin{equation*}\label{app65}
[\fA : \fB] \le q!.
\end{equation*}
This proves Theorem \ref{thmfirst2}.


\end{document}